


\documentclass[leqno]{aadmbook}
\usepackage{amsthm}
\usepackage{amsfonts}
\usepackage{amsmath}
\pagestyle{myheadings}

\usepackage{latexsym,amsfonts,mathptm}
\usepackage{epsfig,float,graphicx}
\restylefloat{figure} \floatplacement{figure}{htbp}
\textwidth 28cc

\def\dj{d\kern-0.4em\char"16\kern-0.1em}
\def\Dj{\mbox{\raise 0.3ex\hbox{-}\kern-0.38em D}}

\markboth{{\small\rm \hfill Jelena \Dj oki\'
c,  Ksenija Doroslova\v cki,  Olga Bodro\v{z}a-Panti\'{c}
\hfill}\hspace{-\textwidth}%
\underline{${{}_{}}_{}$\hspace{\textwidth}}}
{\underline{${{}_{}}_{}$\hspace{\textwidth}}\hspace{-\textwidth}%
{\small\rm \hfill
  The Structure of the  2-factor Transfer Digraph  for Linear Grid Graphs
  \hfill}}

\setcounter{page}{1}
\textheight 42cc
\parskip .5mm
\parindent 2cc

\newtheorem{thm}{Theorem}
\newtheorem{lem}{Lemma}
\newtheorem{prp}{Proposition}

\newtheorem{df}{Definition}

\newtheorem{exm}{Example}

\def\ds{\displaystyle}
\def\dzn{,\kern-0.1em,}

\input cyracc.def
 

\def\be{\begin{equation} }
\def\ee{\end{equation} }
\def\bfl{\begin{flushleft} }
\def\efl{\end{flushleft} }
\def\bfr{\begin{flushright} }
\def\efr{\end{flushright} }
\def\bc{\begin{center}}
\def\vs*{\vspace*}
\def\hs*{\hspace*}
\def\ec{\end{center}}
\def\beq{\begin{eqnarray}}
\def\eeq{\end{eqnarray}}

\def\ben{\begin{enumerate}}
\def\een{\end{enumerate}}
\def\bit{\begin{itemize}}
\def\eit{\end{itemize}}

\begin{document}


\oddsidemargin 16.5mm
\evensidemargin 16.5mm

\thispagestyle{plain}



\vspace{5cc}
\begin{center}
{\large\bf  THE  STRUCTURE OF  THE  2-FACTOR TRANSFER DIGRAPH COMMON FOR
 RECTANGULAR, THICK CYLINDER AND MOEBIUS STRIP  GRID GRAPHS
\rule{0mm}{6mm}\renewcommand{\thefootnote}{}
\footnotetext{\scriptsize 2010 Mathematics Subject Classification.
05C38, 05C50, 05A15, 05C30, 05C85.

\rule{2.4mm}{0mm}Keywords and Phrases:  2-factor, Transfer matrix, Thick grid cylinder,  Moebius strip}}

\vspace{1cc} {\large\it   Jelena \Dj oki\' c, Ksenija Doroslova\v
cki\footnote{corresponding author},  Olga Bodro\v{z}a-Panti\'{c}}

 \begin{center}{\it Dedicated to the memory of a distinguished scientist and colleague, \\ Ratko To\v si\' c (1942 - 2022)} \end{center}

\vspace{1cc}

\parbox{24cc}{{\small
In this paper, we prove that all but one of the components   of   the  transfer digraph ${\cal D}^*_m$
needed for the  enumeration of 2-factors  in  the rectangular, thick cylinder and  Moebius strip grid graphs
of the fixed width $m$ ($m \in N$) are bipartite digraphs
and that their  orders  could be expressed in term of  binomial coefficients.
In addition, we prove that  the set of vertices of each component consists of all the  binary  $m$-words for which
the difference of numbers of zeros in odd and even positions is constant.
}}
\end{center}

\vspace{1cc}

\vspace{1.5cc}
\begin{center}
{\bf  1.  INTRODUCTION}
\end{center}
\label{sec:intro}

\vspace*{4mm}

Robotic and biochips technology trends  actualize the problem of  generating and enumeration of  Hamiltonian paths in grid graphs \cite{LCK,NW}.
 The counting of Hamiltonian cycles  on   specific  grid graphs was the subject of interest in
\cite{BKDP1}-\cite{BT94}, \cite{Kar}, \cite{KaP}, \cite{P} and \cite{VZB}.
 The transfer matrix approach  has been  proven to be the  most suitable  for this
  and similar problems \cite{EJ,J1,KJ1}. Namely, the specificity  of the considered graphs
  is reflected in possibility of grouping their vertices in columns that are  suitable  for coding
as words of fixed length over some alphabet.
Whether  the subgraph  induced by the vertices from the  same column is the path $P_m$ or cycle $C_m$  ($m$ is the number of vertices in columns) we refer to these grid graphs as {\em linear} (the rectangular, thick cylinder and  Moebius strip grid graphs)
 or  {\em circular} ones (thin cylinder, torus  and  Klein bottle grid graphs).
 In this paper we deal with the former case. In the  latter case the coding words are circular and
 the  reader interested in that topic is referred to \cite{DjBD3}.

\begin{df} \label{def:grafovi}
{\bf {\em The Rectangular (grid) graph}} $RG_m(n)$, {\bf {\em thin (grid) cylinder}}  $TnC_m(n)$ and  {\bf {\em  thick (grid) cylinder}} $TkC_m(n)$ ($m,n \in N$) are
 $P_m \times P_{n}$, $ C_m \times P_n$ and  $P_m \times C_n$, respectively. \\
 {\bf {\em The  Moebius strip}} $MS_m(n)$  is obtained from $RG_m(n+1) = P_m \times P_{n+1}$
  by identification  of  corresponding vertices from the first and  last  column in the opposite direction and without duplicating edges.
  The value $m$ is called the {\bf {\em  width}} of the grid graph.
  \end{df}
The thick grid cylinder $TkC_m(n)$  can be also obtained from  $RG_m(n+1) = P_m \times P_{n+1}$
  by identification  of  corresponding vertices from the first and  last  column  (in the  same direction) and without  duplicating edges (see  Figure~\ref{SCiMS}).

\begin{figure}[htb]
\begin{center}
\includegraphics[width=5in]{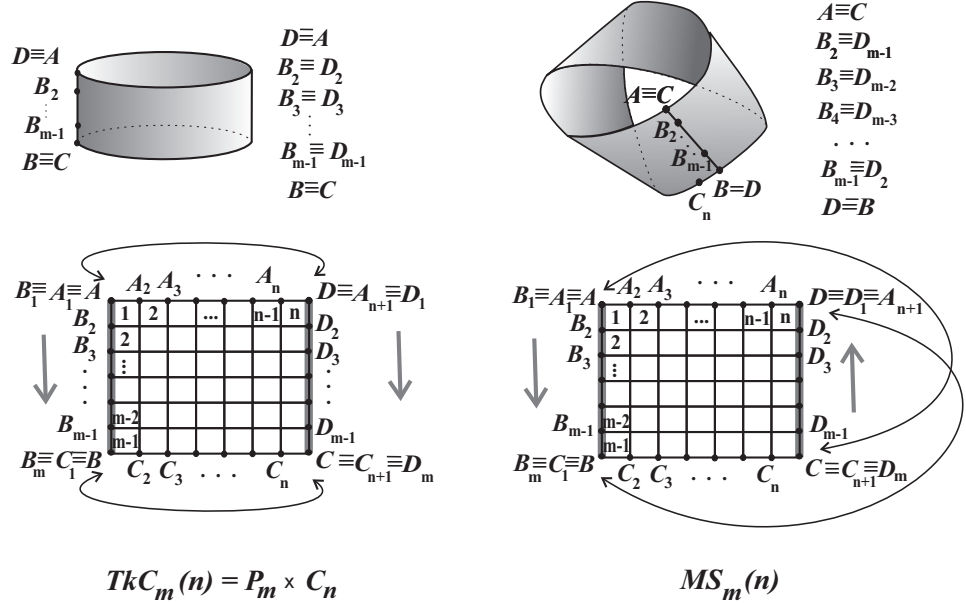}
\\ \ \vspace*{-18pt}
\end{center}
\caption{ The thick grid cylinder $TkC_m(n)$ and the  Moebius strip $MS_m(n)$}
\label{SCiMS}
\end{figure}

 In  recent papers \cite{BKDP1}-\cite{BKP} dealing with Hamiltonian cycles in the rectangular grid graph, thin
 and   thick  cylinder and  their triangular variants
 some open   questions have occurred.
 For example, the numbers of the contractible and non-contractible Hamiltonian cycles  for thin cylinder graph are  asymptotically equal (when $n \rightarrow \infty$) \cite{BKP} and the same is valid  for its triangular variant \cite{BKDP2}.
For the thick grid cylinder the  contractible Hamiltonian cycles are more numerous than the  non-contractible ones iff  $m$ is even \cite{BKDjDP}.
The coefficient for dominant eigenvalue  for non-contractible HCs is equal to $1$ (computational data for $m \leq 10$) \cite{BKDP1}.
Also, positive dominant characteristic root  for contractible HCs in a thick grid cylinder is equal to the same one associated with rectangular grid graph (computational data for $m \leq 10$) \cite{BKDP1,BPPB}.

Motivated to approach  more closely  to the  answers on  these questions we started  with  the  investigation of   2-factors  in these graphs as a generalization of  Hamiltonian cycles \cite{DjBD}. Additionally, we expanded our research to the new class of grid  graphs -  Moebius strips  $MS_m(n)$. We   wondered if the same or similar properties   related to HCs   would  remain valid for 2-factors or not, and wanted to see if  some conclusions for 2-factors could help in  proving the mentioned conjectures  for HCs. For example, the property of the coefficient for dominant eigenvalue  for non-contractible HCs appeared also
in case of 2-factors for both $TkG_m(n)$ and $MS_m(n)$.

A spanning 2-regular subgraph of a graph is  called a  \emph{ $2$-factor}. Obviously, it is  a union of disjoint cycles.
In  Figure~\ref{Adinkras},   the boundary of the (gray) figure, known as {\em Adinkra}, consists of $7$ cycles and represents a 2-factor of the rectangular grid graph $RG_{14}(14) = P_{14} \times P_{14}$. (In theoretical physics, adinkras  are  geometric objects that encode mathematical relationships between supersymmetric particles. The name ``adinkra'' is linked to  West African symbols that represent wise sayings \cite{G}.)

\begin{figure}[htb]
\begin{center}
\includegraphics[width=1.5in]{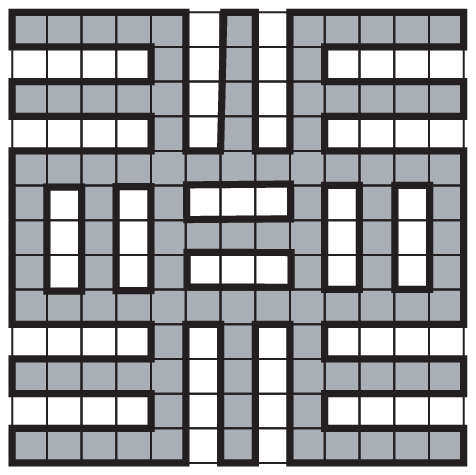}
\\ \ \vspace*{-18pt}
\end{center}
\caption{ Adinkra ``nea onnim no sua a, ohu'' (``he who does not know can become knowledgeable through learning'')}
\label{Adinkras}
\end{figure}

For a given 2-factor of a grid graph $G$ there exist six possible
situations in any vertex shown in  Figure~\ref{CvorniKod}. Namely, for any vertex $v \in V(G)$ exactly two edges of the considered 2-factor (bold lines) are incident to $v$.
The letters  attached to these situations are called {\em code letters}

\begin{figure}[htb]
\begin{center}
\includegraphics[width=3in]{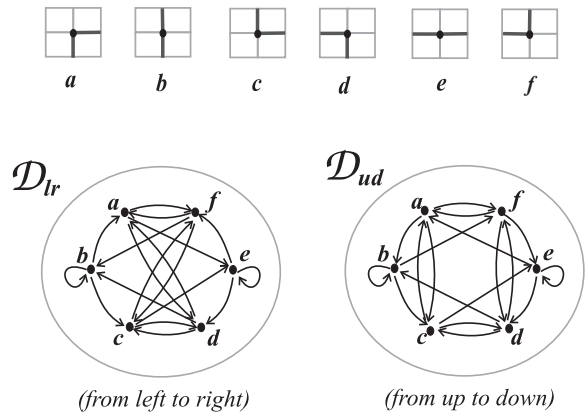}
\\ \ \vspace*{-18pt}
\end{center}
\caption{ Six possible
situations  in any vertex for a given 2-factor (above); the  digraphs ${\cal D}_{ud}$ and  ${\cal D}_{lr}$ (below)}
\label{CvorniKod}
\end{figure}

\begin{df} \label{def:2-factor}
For a given 2-factor of a linear grid graph $G$  of width $m$  and with $m \cdot n$ vertices ($m,n \in N$),
the {\bf {\em code matrix}} $ \ds \left[ \alpha_{i,j}\right]_{m \times n}$  is
a matrix of order $m \times n$ with entries from $\{ a,b, c, d, e,f \}$ where
$\alpha_{i,j}$ is the code letter for the $i$-th vertex in $j$-th
column of $G$.
\end{df}

The possibility that two code letters appear as neighbors in the same column  (row) of the code matrix is shown in the
 auxiliary digraph  ${\cal D}_{ud}$ (${\cal D}_{lr}$) in   Figure~\ref{CvorniKod}.
The code matrix  $ \ds \left[ \alpha_{i,j}\right]_{m \times n}$ for a given 2-factor of a linear grid graph $G$
 has the following properties \cite{DjBD}:

\begin{enumerate}
\item \textbf{Column conditions:} \ For every fixed \ $j$  \ ($1 \leq j \leq n$),

\begin{enumerate}
 \item \ the ordered pairs \ $ (\alpha_{i,j}, \alpha_{i+1,j})$, \ where \ $1 \leq i
\leq m-1$, \ must be arcs in the digraph \ ${\cal D}_{ud}$.

\item  $ \alpha_{1,j} \in \{ a, d, e \}$ \ and
\ $\alpha_{m,j} \in \{ c, e, f \}$.
             \end{enumerate}

\item \textbf{Adjacency of column condition:} \  For every  fixed  $j$, where  \ $1 \leq j \leq n-1 $,
 the ordered pairs \ $ (\alpha_{i,j}, \alpha_{i,j+1})$, \ where \ $1
\leq i \leq m$, \ must be arcs in the digraph \ ${\cal D}_{lr}$.

\item \textbf{First and Last Column conditions:}
\begin{enumerate}
 \item
If $G= RG_{m}(n)$, then
the alpha-word of the first  column consists of the letters from the
set \ $\{ a, b, c \}$
 and of  the last column of the letters
  from the set \ $\{  b, d, f \}$.

\item
If $G= TkC_{m}(n)$,  then
 the ordered pairs \ $ (\alpha_{i,n}, \alpha_{i,1})$, \ where \ $1
\leq i \leq m$, \ must be arcs in the digraph \ ${\cal D}_{lr}$.

\item
If $G= MS_{m}(n)$, then
 the ordered pairs \ $ (\overline{\alpha}_{i,n}, \alpha_{m -i+1,1})$, $1 \leq i \leq m$,  must be arcs in the digraph \ ${\cal D}_{lr}$,  \\
where
$\overline{a} = c, \overline{b} = b, \overline{c} = a$,
 $ \overline{d} = f, \overline{e} = e$ and  $\overline{f} = d$ (the adequate label obtained
 by applying reflection  symmetry with the horizontal  axis as its line of symmetry).
 \end{enumerate}
\end{enumerate}

This enables that the counting of such code matrices (in fact 2-factors) is  reduced to  the counting of directed walks
in an auxiliary  digraph  $ {\cal D}_{m}
\stackrel{\rm def}{=} $  $(V({\cal D}_{m}), E({\cal D}_{m}))$,  common  for all linear  graphs.
The set of its  vertices  \ $ V({\cal D}_{m}) $  \ consists of all possible words $\alpha_{1,j}\alpha_{2,j} \ldots \alpha_{m,j}$ over  alphabet $ \{ a,b,c,d,e,f \}$ (called  {\em alpha-words}) which fulfill  {\em  Column conditions}. An arc $(v,u) \in E({\cal D}_{m})$
 joins  $ v = \alpha_{1,j}\alpha_{2,j} \ldots \alpha_{m,j}$ to  \ $ u= \alpha_{1,j+1}\alpha_{2,j+1} \ldots \alpha_{m,j+1}$, i.e.  $ v \rightarrow u $
   iff the {\em Adjacency of column condition} is satisfied for the ordered pair $(v,u)$
(i.e. the vertex $v$  can be the previous column for the vertex $u$ in  the  code  matrix  $ \left[ \alpha_{i,j}\right]_{m \times n}$ for a 2-factor of $G$).

\begin{figure}[htb]
\begin{center}
\includegraphics[width=3in]{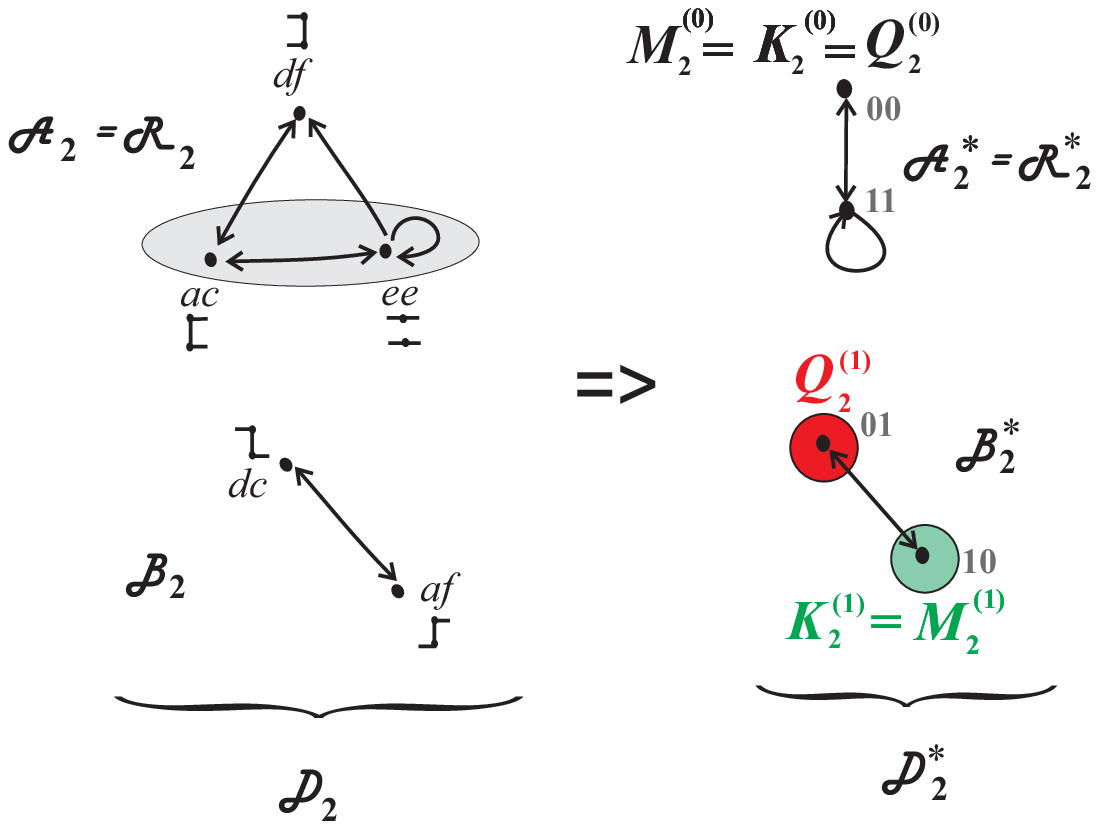}
\\ \ \vspace*{-18pt}
\end{center}
\caption{The  digraphs ${\cal D}_{2}$ and  ${\cal D}^*_{2}$}
\label{D2iD2zvezda}
\end{figure}

\begin{df} \label{def:2-factor}
The {\bf {\em outlet word}} of  a vertex $\alpha \equiv \alpha_1 \alpha_2 \ldots \alpha_m \in V({\cal D}_{m})$ is  the  binary word
$o(\alpha) \equiv o_1o_2 \ldots o_m$, where   $ \ds o_j
\stackrel{\rm def}{=} \left \{
\begin{array}{cc}{}
0, & \; \; if \; \; \alpha_j \in \{ b, d, f \} \\
1, & \; \; if \; \; \alpha_j \in \{ a, c, e \}
\end{array}
 \right. ,  \; \; \; 1 \leq j \leq m.
$
\\
$\overline{\alpha}  \stackrel{\rm def}{=}\overline{\alpha}_m \overline{\alpha}_{m-1} \ldots \overline{\alpha}_1 \in V({\cal D}_{m})$ and
 $\overline{o(\alpha)} \stackrel{\rm def}{=} o(\overline{\alpha})$
\end{df}

\begin{exm}  \label{exm:0}
The first and  fifth  column of the code matrix for the  2-factor in  Figure~\ref{Adinkras}  are the words $(ac)^2ab^4c(ac)^2$ and $e(df)^2db^2f(df)^2e$, respectively. Their outlet words are $1^50^41^5$  and $10^{12}1$.
\end{exm}

The   digraph \ $ {\cal D}^*_{m} \stackrel{\rm def}{=}(V({\cal D}^*_{m}), E({\cal D}^*_{m}))$ is obtained
by gluing all  the vertices from  \ $ V({\cal D}_{m})$ \  \ having the same corresponding outlet word
 and replacing all the arcs from  $ E({\cal D}_{m})$ starting from these glued vertices and ending
at    same vertex with only one arc (see  Figure~\ref{D2iD2zvezda}).
 In \cite{DjBD} it is proved that
  every binary word from $ \{ 0,1 \}^m$ except the word  $(01)^k0$ when  $m=2k+1$ ($k \in N $) belongs to  $V({\cal D}^*_m)$ and that
 the adjacency matrix  ${\cal T}^*_m$ of the digraph ${\cal D}^*_m$ is a symmetric binary matrix,
 i.e.   if $v \rightarrow w$, then $w \rightarrow v$, for all $v,w \in V({\cal D}^*_m)$  (for this reason we occasionally use the label $v \leftrightarrow  w$).
 Consequently, each  component of ${\cal D}^*_m$ is a strongly connected digraph.
 By implementation of the algorithm for obtaining the digraphs ${\cal D}^*_m$
  the  data for $m \leq 12$  gathered in \cite{DjBD}
   suggest that  all the components of  ${\cal D}^{*}_{m}$  except one  are bipartite digraphs
and that the  order of each component could be expressed in term of  binomial coefficients.
 In this paper,  we prove that these  assumptions are true. Moreover, for every component we give a characterisation of its set of vertices.

\begin{thm}  \label{conj:1}  (Conjecture  in \cite{DjBD}) \\
For each  $m\geq 2 $,  the digraph  ${\cal D}^{*}_m$ has exactly $\ds \left\lfloor    \ds \frac{m}{2} \right\rfloor + 1$ components, i.e.
 $\ds {\cal D}^{*}_m = {\cal A}^*_m    \cup $ $\ds  (\bigcup_{s=1}^{\left\lfloor    \frac{m}{2} \right\rfloor }{\cal B}^{*(s)}_m)$, where
 $\ds \mid V({\cal B}^{*(1)}_{m}) \mid  \geq
  \mid V({\cal B}^{*(2)}_{m}) \mid  \geq  \; \; \;  \ldots \; \; \;  \geq \mid V( {\cal B}^{*(\lfloor    m/2 \rfloor )}_{m}) \mid $
 and   ${\cal A}^*_m $ is the one containing $1^m$.
All the components  ${\cal B}^{*(s)}_{m}$ ($ 1 \leq s \leq \ds \left\lfloor    \ds \frac{m}{2} \right\rfloor $) are bipartite digraphs.

 If  $m$ is  odd, then $\ds \mid V({\cal B}^{*(s)}_{m}) \mid  =\ds  {m + 1 \choose   (m+1)/2 -s} \mbox{ \  and \ } \ds \mid V({\cal A}^{*}_{m}) \mid =  \ds {m  \choose (m-1)/2}.$

  If  $m$ is even, then $\ds \mid V({\cal B}^{*(s)}_{m}) \mid  = \ds 2 {m \choose   m/2 -s} \mbox{ \  and \  } \ds \mid V({\cal A}^{*}_{m}) \mid =  \ds {m \choose m/2}.$ \\
    The vertices $v$ and $\overline{v}$ belong to  the same component. When the component is bipartite they are placed in the same  class  iff $m$ is odd.
\end{thm}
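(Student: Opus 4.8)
The plan is to drop the alphabet words and argue entirely with the binary outlet words, using from \cite{DjBD} that $V(\mathcal{D}^*_m)$ consists of all of $\{0,1\}^m$ (minus $(01)^k0$ when $m=2k+1$) and that the adjacency is symmetric. For $v=v_1\cdots v_m\in\{0,1\}^m$ write $z_i(v)=1-v_i$ and let $\delta(v)$ be the number of zeros of $v$ in odd positions minus the number of zeros in even positions. The first task is to unwind the Column and Adjacency conditions into a workable arc rule: an arc $v\leftrightarrow w$ of $\mathcal{D}^*_m$ corresponds to a column of a $2$-factor whose left-edge pattern is $v$, whose right-edge pattern is $w$, and whose vertical-edge vector $\varepsilon=(\varepsilon_0,\dots,\varepsilon_m)$ has $\varepsilon_0=\varepsilon_m=0$; the requirement that each vertex of the column have degree exactly $2$ becomes $z_i(w)=\varepsilon_{i-1}+\varepsilon_i-z_i(v)\in\{0,1\}$ for $1\le i\le m$. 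So $v\leftrightarrow w$ iff such an $\varepsilon$ exists; that $w$ is then automatically a legal vertex, and that the relation is symmetric (use the same $\varepsilon$), are immediate.

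Granting this, multiply the arc rule by $(-1)^{i+1}$ and sum over $i$: the right side collapses (interior terms cancel) to $\varepsilon_0+(-1)^{m+1}\varepsilon_m=0$, whence $\delta(w)=-\delta(v)$ on every arc, and so $|\delta|$ is constant on each component. Next I would note that the arc rule with $\varepsilon\equiv 0$ gives a loop at $1^m$ (so the component $V_0:=\{v:\delta(v)=0\}$ contains $1^m$ and will turn out non-bipartite), that on legal vertices $\delta$ attains precisely the values $-\lfloor m/2\rfloor,\dots,\lfloor m/2\rfloor$ (for odd $m=2k+1$ the value $k+1$ is realized only by the forbidden word $(01)^k0$ and $-(k+1)$ is impossible), and that for $0\le s\le\lfloor m/2\rfloor$ the word $w_s^{+}:=(01)^s1^{m-2s}$ is a legal vertex with $\delta=s$. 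Hence the $\lfloor m/2\rfloor+1$ level sets $V_s:=\{v:|\delta(v)|=s\}$ are nonempty, giving at least $\lfloor m/2\rfloor+1$ components.

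The decisive and hardest step is to show each $V_s$ induces a (weakly) connected subdigraph, so the components are exactly $V_0,\dots,V_{\lfloor m/2\rfloor}$. I would prove that every legal $v$ with $\delta(v)=s\ge 0$ is joined to $w_s^{+}$ and every $v$ with $\delta(v)=-s$ to $w_s^{-}:=(10)^s1^{m-2s}$; since a single arc joins $w_s^{+}$ to $w_s^{-}$ (take $\varepsilon_{2j-1}=1$ for $1\le j\le s$ and all other $\varepsilon_i=0$, and check the rule), $V_s$ is then connected. Reaching the canonical vertex is done by induction on a monovariant, e.g. the sum of the positions of the zeros of $v$, which within a fixed value of $\delta$ is minimized exactly at $w_s^{\pm}$: if $v$ is not canonical one produces a walk of length at most $2$ to a vertex of the same $\delta$-value with strictly smaller monovariant, by choosing $\varepsilon$ so that the associated subgraph of the path $P_m$ is a disjoint union of intervals that cover all zeros of $v$, have their interiors inside the zero set of $v$, and either collapse a block of two or more consecutive zeros or slide an isolated zero one position toward the front — with special care at positions $1$ and $m$ and for the forbidden word. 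This monovariant bookkeeping is where I expect the real work, and the principal obstacle, to lie; everything else is essentially formal.

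It then remains to collect consequences. For $s\ge 1$ the identity $\delta(w)=-\delta(v)$ shows no arc of $V_s$ joins two vertices with equal $\delta$, so $V_s=\{\delta=s\}\sqcup\{\delta=-s\}$ is a bipartition of a connected digraph with both parts nonempty; hence $\mathcal{B}^{*(s)}_m:=V_s$ is bipartite with these classes, while $\mathcal{A}^*_m:=V_0$ (containing $1^m$, with a loop) is the unique non-bipartite component. The orders follow by counting: for $m=2k$, $|\{\delta=s\}|=\sum_j\binom{k}{j}\binom{k}{j-s}=\binom{2k}{k-s}$ by Vandermonde, so $|\mathcal{A}^*_m|=\binom{2k}{k}$ and $|\mathcal{B}^{*(s)}_m|=2\binom{2k}{k-s}$; for $m=2k+1$, $|\{\delta=s\}|=\binom{2k+1}{k+s}$ and $|\{\delta=-s\}|=\binom{2k+1}{k-s}$, so $|\mathcal{A}^*_m|=\binom{2k+1}{k}$ and, by Pascal's rule, $|\mathcal{B}^{*(s)}_m|=\binom{2k+2}{k+1-s}$ (the forbidden word has $\delta=k+1$ and affects none of these counts). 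Unimodality of binomial coefficients yields the asserted ordering of the $|V(\mathcal{B}^{*(s)}_m)|$. Finally, from the definitions $\overline{v}$ is just the reversal of $v$ as a binary word, so $z_i(\overline v)=z_{m+1-i}(v)$ and $\delta(\overline v)=(-1)^{m+1}\delta(v)$; thus $|\delta(\overline v)|=|\delta(v)|$, i.e. $v$ and $\overline v$ lie in the same component, and when that component is bipartite they lie in the same class iff $\delta(\overline v)=\delta(v)$, i.e. iff $m$ is odd.
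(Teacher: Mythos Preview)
Your proposal is correct in its architecture and differs from the paper in several instructive ways. Your single telescoping identity $\delta(w)=-\delta(v)$ on each arc accomplishes in one line what the paper spreads over two results: the paper first shows (Lemma~\ref{lem:2}), via a two-colouring of the grid $P_m\times P_k$ and a parity count on path-endpoints, that the designated representatives (``queens'', ``court ladies'') of distinct sets $S_m^{(s)}$ cannot be joined by a walk, and then separately establishes bipartiteness for $s\ge1$ inside an induction on $m$ (Lemma~\ref{thm:1}); your algebraic identity subsumes both and is genuinely cleaner. Likewise your direct computation $\delta(\overline v)=(-1)^{m+1}\delta(v)$ settles the $v\mapsto\overline v$ claim immediately, whereas the paper (Lemma~\ref{thm:2}) argues via the parity of walk-lengths between the queen $Q_m^{(s)}$ and the king $K_m^{(s)}=\overline{Q_m^{(s)}}$. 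The one place the paper is more complete is connectivity of each level set: there the paper inducts on $m$, splitting $S_m^{(s)}$ by prefix into $\mathcal O_m^{(s)}=1\,S_{m-1}^{(s)}$, $\mathcal J_m^{(s)}=00\,S_{m-2}^{(s)}$, and $\mathcal K_m^{(s)}$ (prefix $01$), observing that $\langle\mathcal O_m^{(s)}\rangle\cong\langle S_{m-1}^{(s)}\rangle$ is connected by hypothesis, and exhibiting for each vertex of $\mathcal J_m^{(s)}\cup\mathcal K_m^{(s)}$ a neighbour in $\mathcal O_m^{(s)}$ via the moves $00\leftrightarrow11$ and $01\leftrightarrow10$. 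Your monovariant scheme (sum of zero-positions, decreased by collapsing a $00$-block or sliding an isolated zero) is plausible and your local analysis of admissible $\varepsilon$ is right, but as you yourself flag, this is where the real bookkeeping lies; the paper's prefix induction sidesteps it entirely and may be worth adopting if the monovariant cases become unwieldy. The cardinality computations via Vandermonde's identity are essentially identical in both approaches.
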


\begin{exm}  \label{exm:2}
The digraph  ${\cal D}^{*}_4$ depicted in  Figure~\ref{D3_D4}
has $ \ds \left\lfloor  \frac{4}{2} \right\rfloor + 1=3$ components of which all but the one  with the loop are bipartite digraphs.
The cardinalities of its components are  $ \ds {4 \choose 4/2} = 6$, $ \ds 2 {4 \choose   4/2 -1}=8$ and $ \ds 2 {4 \choose   4/2 -2}=2$.
Note that  the vertices $v$ and $\overline{v}$ are placed in the different  classes (of different colors).
\end{exm}

\begin{figure}[htb]
\begin{center}
\includegraphics[width=0.8\columnwidth]{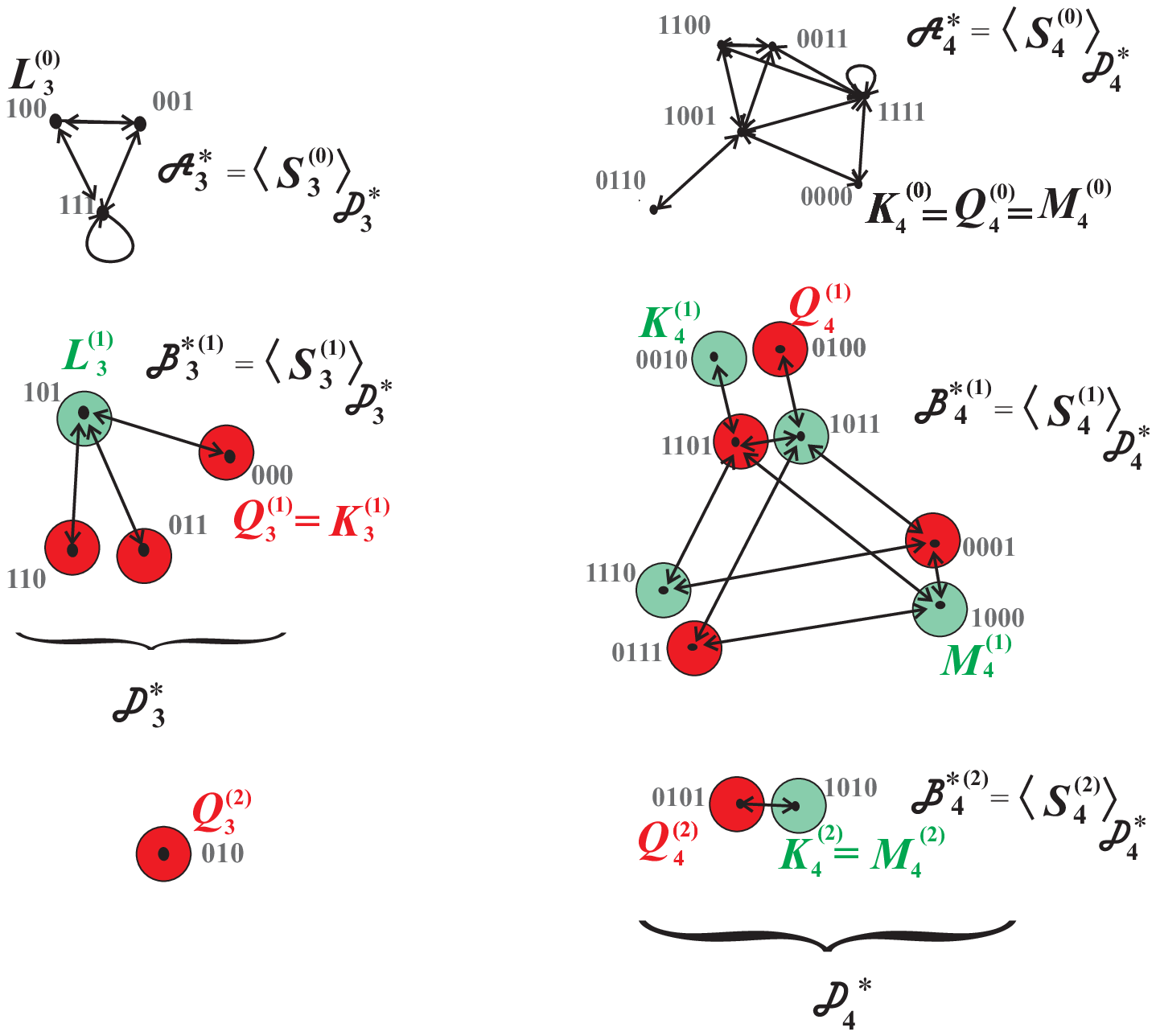}
\\ \ \vspace*{-18pt}
\end{center}
\caption{{\textbf{The digraphs  ${\cal D}_3^*$ and  ${\cal D}_4^*$  }}}
\label{D3_D4}
\end{figure}

The transfer matrix  ${\cal T}^*_m= [a_{ij}]$ which is used for the enumeration of  2-factors in the considered grid graphs is the adjacency (binary) matrix  of ${\cal D}_m^*$.

\begin{thm}   \label{thm:stara}(\cite{DjBD})\\
If $f_m^G(n)$ denotes the number of 2-factors  of the linear grid graph  $G$ of width $m$ with $m \cdot n$ vertices, then
 \\
  \parbox[t]{3cm}{$$f_m^G(n)=  \left \{ \begin{array}{rl}{} \vspace*{2.5cm}\end{array} \right. $$}
\parbox[t]{8cm}{$$ \begin{array}{rl}  \begin{array}{rl} a_{1,1}^{(n)}, & \mbox{ if  }\; \; G = RG , \\ \\
 tr(({\cal T}_m^*)^n) = \ds \sum_{v_i \in   V({\cal D}_m^*) } a_{i,i}^{(n)},  &                   \mbox{ if  }\; \; G= TkC,\\ \\
    tr({\cal P}_m^{*}  \cdot   ({\cal T}_m^*)^n) ,           &          \mbox{ if  }\; \; G = MS, \hspace*{0.2cm}\end{array}\end{array} $$}\\
 where      $v_1 \equiv 0^m$ (corresponding to the first row and first column of ${\cal T}^*_m$) and ${\cal P}_m^*$ is the permutation matrix of order $ \mid V({\cal D}^*_{m}) \mid $ which represents  the product  of all transpositions $(v,\overline{v})$ $(v,\overline{v} \in V({\cal D}_m^*))$.
 \end{thm}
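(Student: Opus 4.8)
The plan is to build a clean bijection between the 2-factors of $G$ and the directed walks of length $n$ in ${\cal D}^*_m$, and then read the three formulas off elementary matrix-power identities; the three graph types will differ \emph{only} in the boundary/closure condition imposed on the two endpoints of the walk. The two genuinely delicate points I expect to spend effort on are the uniqueness lemma that makes the outlet-gluing lossless (and keeps ${\cal T}^*_m$ binary), and the reflection/position bookkeeping in the Moebius closure.

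First I would fix the meaning of the six code letters in terms of which of the four incident grid edges (left, right, up, down) a vertex of the 2-factor uses; the \emph{Column conditions} together with the first/last-column sets in the list preceding ${\cal D}_m$ pin this down, so that $o(\alpha)$ records exactly the rows in which a column $\alpha$ sends a horizontal edge to the right. Dually I introduce the \emph{inlet word} $\iota(\alpha)\equiv\iota_1\ldots\iota_m$ with $\iota_j=1$ iff $\alpha_j\in\{d,e,f\}$, recording the rows carrying a horizontal edge to the left. The key lemma is that a word satisfying the Column conditions is \emph{uniquely} determined by the pair $(\iota(\alpha),o(\alpha))$: at row $i$ the number of vertical edges is $2-\iota_i-o_i$, so writing $x_i\in\{0,1\}$ for the presence of the vertical edge between rows $i$ and $i+1$, the degree-$2$ requirement forces the recurrence $x_i=(2-\iota_i-o_i)-x_{i-1}$ with $x_0=x_m=0$, which has at most one admissible solution. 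Since the \emph{Adjacency of column condition} is precisely $o(u_j)=\iota(u_{j+1})$, this lemma lets me identify each arc $p\to q$ of ${\cal D}^*_m$ with the unique column whose inlet is $p$ and whose outlet is $q$ -- which is exactly why ${\cal T}^*_m$ is binary, and which turns ``one arc $=$ one column'' into the correct count.

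With this identification, a 2-factor with labelled columns $u_1,\dots,u_n$ corresponds to the outlet sequence $q_0,q_1,\dots,q_n$ given by $q_j=o(u_j)$ and $q_{j-1}=\iota(u_j)$, and the columns are exactly the $n$ consecutive arcs of the walk $q_0\to q_1\to\cdots\to q_n$ in ${\cal D}^*_m$. Everything internal to the strip is then automatic and only the endpoints remain constrained. For $G=RG$ the first column has no left edge and the last no right edge, forcing $q_0=q_n=0^m=v_1$, so the 2-factors biject with the length-$n$ walks from $v_1$ to $v_1$, numbering $a_{1,1}^{(n)}$. For $G=TkC$ the cyclic condition \emph{3(b)} reads $o(u_n)=\iota(u_1)$, i.e. $q_n=q_0$; as the columns are labelled, the 2-factors biject with closed walks of length $n$ based at an arbitrary vertex, giving $\sum_i a_{i,i}^{(n)}=\mathrm{tr}(({\cal T}^*_m)^n)$.

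The Moebius case carries the real bookkeeping, so I would treat it last. Unwinding \emph{3(c)} with the letter reflection, and using that the bar preserves the outlet bit of each letter while reversing positions (so $\overline{o(\alpha)}_i=o(\alpha)_{m-i+1}$), the twisted closure $o(\overline{\alpha_{i,n}})=\iota(\alpha_{m-i+1,1})$ collapses to $\iota(u_1)=\overline{o(u_n)}$, that is $q_0=\overline{q_n}$. Hence the $MS$ 2-factors biject with length-$n$ walks whose start vertex is the bar-image of the end vertex, numbering $\sum_{w}[({\cal T}^*_m)^n]_{w,\overline{w}}$. Finally, since ${\cal P}^*_m$ is the permutation matrix of $v\mapsto\overline v$ -- symmetric because the bar is an involution -- one has $[{\cal P}^*_m]_{ik}=1$ iff $k=\overline i$, whence $\mathrm{tr}({\cal P}^*_m({\cal T}^*_m)^n)=\sum_i[({\cal T}^*_m)^n]_{\overline i,i}$, which matches the displayed sum after the substitution $w=\overline i$. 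The matrix identities themselves are routine; the care goes into the uniqueness lemma and into tracking the reflection in the Moebius closure.
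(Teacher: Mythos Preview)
The paper does not prove this theorem at all: Theorem~\ref{thm:stara} is stated with the attribution ``(\cite{DjBD})'' and is imported wholesale from that reference, so there is no in-paper proof to compare against. What can be said is that your argument is correct and is exactly the mechanism that the surrounding text of the present paper presupposes (see Example~3, where a $2$-factor of $RG_{14}(14)$ is matched with a closed walk of length $14$ in ${\cal D}^*_{14}$ starting and ending at $0^{14}$).

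Your key lemma --- that a column satisfying the Column conditions is uniquely determined by its inlet/outlet pair via the recurrence $x_i = 2-\iota_i-o_i-x_{i-1}$ with $x_0=x_m=0$ --- is precisely the reason the quotient digraph ${\cal D}^*_m$ has a \emph{binary} adjacency matrix and why ``one arc $=$ one column'' makes the walk count equal the $2$-factor count. The paper takes this for granted (it only records that ${\cal T}^*_m$ is symmetric binary, citing \cite{DjBD}), so your explicit derivation is a genuine addition. Your handling of the Moebius closure is also correct: since the letter involution $\overline{\,\cdot\,}$ fixes the sets $\{a,c,e\}$ and $\{b,d,f\}$, the outlet bit of a letter is preserved, and together with the positional reversal in $\overline{\alpha}=\overline{\alpha_m}\cdots\overline{\alpha_1}$ this gives $\overline{o(\alpha)}_j=o(\alpha)_{m-j+1}$, whence condition 3(c) becomes $q_0=\overline{q_n}$ and the trace identity with ${\cal P}^*_m$ follows.

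In short: there is nothing in the paper to diverge from, and your proof is sound.
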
 \noindent

\begin{exm}  \label{exm:1}
The 2-factor in  Figure~\ref{Adinkras}   corresponds to the closed directed walk of length $14$ in ${\cal D}^*_{14}$ starting and finishing with $0^{14}$, namely, $0^{14} \rightarrow 1^50^41^5  \rightarrow 1^60^21^6 \rightarrow 1^50^41^5  \rightarrow 1^60^21^6  \rightarrow 10^{12}1  \rightarrow  \ldots    \rightarrow    1^60^21^6 \rightarrow  1^50^41^5  \rightarrow 0^{14} $.
\end{exm}

The  main goal in this paper is the first proof of Theorem~\ref{conj:1}.
\\ In Section 2, we introduce  the sets of binary words of length $m$, denoted by  $S_{m}^{(s)}$ ($0 \leq s \leq \lfloor m/2 \rfloor $) (and  their representatives)
by means of counting the numbers of zeros in even and odd positions. We prove that two representatives of different sets  $S_{m}^{(s_1)}$ and $S_{m}^{(s_2)},$ $s_1 \neq s_2$ can not be connected by a directed walk in  ${\cal D}_m^*$.

In Section 3, we prove that the subdigraphs of  ${\cal D}_m^*$ induced by $S_{m}^{(s)}$ for $0 \leq s \leq \lfloor m/2 \rfloor $ are strongly connected
(hence the  components of   ${\cal D}_m^*$)  and  moreover for $s \geq 1$ they are bipartite digraphs.
The cardinality of each class of these bipartite digraphs is determined. We prove that $v$ and $\overline{v}$ belong to the same set $S_m^{(s)}$, for any $v \in V({\cal D}_m^*)$. In this way, we complete the proof of Theorem~\ref{conj:1}.



\begin{center}
{\bf 2. PRELIMINARIES}
\end{center}

\begin{df} \label{def:ch}
For a binary word $x $ of length $m$ ($ m \in N$) we denote by $odd(x)$ ($even(x)$) the total number of 0's at odd (even) positions in $x$.
The difference  $odd(x) - even(x)$ is labeled as $Z(x)$.
\end{df}

In order to  prove Theorem~\ref{conj:1},  we introduce for each integer $m \geq 1$ the  sets of binary words of length $m$:
 $S_m^{(0)}, \; S_m^{(1)}= R_m^{(1)} \cup  G_m^{(1)}, \; S_m^{(2)}= R_m^{(2)} \cup  G_m^{(2)}, \ldots ,
\;  S_m^{(\lfloor m/2 \rfloor )}= R_m^{(\lfloor m/2 \rfloor )} \cup  G_m^{(\lfloor m/2 \rfloor )}$  in the following way:

\begin{df} \label{def:ch}
The set  $S_{m}^{(0)}$ ($m \in N$) consists of all the  binary $m$-words whose number of 0's at odd positions is equal to the  number of 0's at even  positions.
For  $1 \leq s \leq \lfloor m/2 \rfloor$,  $S_m^{(s)}\stackrel{\rm def}{=}  R_m^{(s)} \cup  G_m^{(s)}$  where
the words in   $R_{m}^{(s)}$  and $G_{m}^{(s)}$ are all the  binary words $x$ of the length $m$
for which  $ Z(x)  = s$ and  $ Z(x) = -s$, respectively. Additionally, if  $m$ is odd, then $R_m^{(\lceil m/2 \rceil)}\stackrel{\rm def}{=}\{ 0 (10)^{\lfloor m/2 \rfloor} \}$.
\end{df}

 Clearly, $x \in S_m^{(s)}$ iff  $\mid Z(x) \mid = s$ ($0 \leq s \leq \lfloor m/2 \rfloor $).
In this way  we define the collection of the sets $S_{m}^{(s)}, 0 \leq s \leq  \lfloor m/2 \rfloor$ which represents
  for even $m \in N$ a partition of the set  $\{0,1\}^m$, i.e.
$\ds \bigcup_{s=0}^{\lfloor m/2 \rfloor} S_m^{(s)}  =\{ 0,1\}^m = V({\cal D}_m^*)$.
When $m$ is odd,
the singleton set $R_m^{(\lceil m/2 \rceil)}$ has the only $m$-word $x$ which is not in set of vertices of ${\cal D}_m^*$  \cite{DjBD} and   $Z(x) = \lfloor m/2 \rfloor +1$.
Then, we have
$\ds \bigcup_{s=0}^{\lfloor m/2 \rfloor} S_m^{(s)}  = \{ 0,1\}^m \backslash R_m^{(\lceil m/2 \rceil)} = V({\cal D}_m^*)$, too .

\begin{exm}  \label{exm:n1}
It is easy to check that
\\
$S_1^{(0)} = \{1 \}$, $R_1^{(1)} = \{0 \}$ and $S_1^{(0)} = \{1 \} =  V({\cal D}_1^*)$

\vspace*{3mm}

\noindent
$S_2^{(0)}  = \{ 00, 11 \}$,
  $ S_2^{(1)}= R_2^{(1)} \cup  G_2^{(1)}$,  $R_2^{(1)} =\{ 01 \}$ and  $G_2^{(1)}  =\{ 10 \}$ and \\
  $S_2^{(0)} \cup   S_2^{(1)}= \{0,1 \}^2 =  V({\cal D}_2^*)$

\vspace*{3mm}

\noindent
$S_3^{(0)} =\{ 100, 111, 001  \} $, $ S_3^{(1)}= R_3^{(1)} \cup  G_3^{(1)}$,   $ R_3^{(1)}=\{ 000, 011, 110 \}$,   $G_3^{(1)}= \{ 101 \}$, \\ $R_3^{(2)}  = \{010 \}$ and
 $S_3^{(0)} \cup   S_3^{(1)}= \{0,1 \}^3 \backslash \{010 \}  =  V({\cal D}_3^*)$
 \vspace*{3mm}

\noindent
Also, perceive that  $R_{2k}^{(k)}$, $G_{2k}^{(k)}$, $G_{2k+1}^{(k)}$  and $R_{2k+1}^{(k+1)}$ are singleton sets, i.e.   $R_{2k}^{(k)}= \{ (01)^k \}$, $G_{2k}^{(k)}= \{ (10)^k \}$, $G_{2k+1}^{(k)}= \{ (10)^k1 \}$   and  $R_{2k+1}^{(k+1)}= \{ (01)^k0 \}$ ($k \in N$). \\ Additionally,  the word $1^m $ belongs to $S_{m}^{(0)}$ for any $m \in N$.
\end{exm}

Considering the entries of $S_{m}^{(s)}$ as the vertices of ${\cal D}_m^*$ we call
the words from  $R_{m}^{(s)}$ and $G_{m}^{(s)}$ \ {\em red} and {\em green vertices}, respectively.
The  only binary word of length $m$ that does not belong to $\ds \bigcup_{s=0}^{\lfloor m/2 \rfloor} S_m^{(s)} = V({\cal D}_m^*)$   is $(01)^k0 \in R_{m}^{(k+1)}$ where $m= 2k+1$. Still, we call it {\em red vertex} (by definition).

For an arbitrary  set of binary words $A$, the label  $1A$ (or  $0A$) denotes the set of all words $1a$ ($0a$), where $a \in A$.
Note that the following equalities hold for any binary word $x$:
\begin{prp} \label{prp:novi} $ $

 a)  $Z(0x) = -Z(x)+1$,

 b)  $Z(1x) = -Z(x)$ ,

 c)  $Z(00x) = Z(11x) = Z(x)$,

 d)  $Z(10x) = Z(x)-1$  and

 e)  $Z(01x) = Z(x)+1$.
\end{prp}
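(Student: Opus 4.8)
The plan is to reduce all five identities to a single bookkeeping observation about how the parity of positions changes when one prepends letters to the front of a binary word. Write $x = x_1 x_2 \ldots x_m$, so that $odd(x)$ and $even(x)$ count the zeros of $x$ among the positions $1,3,5,\ldots$ and $2,4,6,\ldots$, respectively, and $Z(x) = odd(x) - even(x)$.

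First I would settle parts (a) and (b) together. Prepending a single letter $y \in \{0,1\}$ to $x$ produces the word $yx$ of length $m+1$ in which $y$ sits at position $1$ and each old letter $x_i$ is shifted from position $i$ to position $i+1$; hence every position of $x$ has its parity reversed. Consequently the old even positions of $x$ become the odd positions of $yx$ (apart from the new position $1$), while the old odd positions become the even positions of $yx$. Counting zeros then gives
\[
odd(yx) = [\,y=0\,] + even(x), \qquad even(yx) = odd(x),
\]
where $[\,y=0\,]$ equals $1$ if $y=0$ and $0$ otherwise. Subtracting yields the compact formula $Z(yx) = [\,y=0\,] - Z(x)$, which is precisely $Z(0x) = -Z(x)+1$ and $Z(1x) = -Z(x)$.

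Parts (c), (d) and (e) then follow by applying this formula twice, which is the cleanest route, since prepending two letters $y_1 y_2$ to $x$ is the same as prepending $y_1$ to the word $y_2 x$. Thus
\[
Z(y_1 y_2 x) = [\,y_1=0\,] - Z(y_2 x) = [\,y_1=0\,] - [\,y_2=0\,] + Z(x).
\]
Specialising the pair $(y_1,y_2)$ to $(0,0)$ and $(1,1)$ gives $Z(00x) = Z(11x) = Z(x)$; the pair $(1,0)$ gives $Z(10x) = Z(x)-1$; and the pair $(0,1)$ gives $Z(01x) = Z(x)+1$. Equivalently, one may verify (c)--(e) directly by noting that prepending two letters shifts every position of $x$ by two and therefore preserves all parities, so only the two freshly inserted letters (one occupying an odd slot and one an even slot) can alter the counts.

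The only point that requires care --- and the single place an off-by-one error could creep in --- is the parity reversal in the single-letter step: one must check that the interior positions genuinely swap their odd and even roles and that the inserted letter lands in an odd slot. Once that is pinned down the remaining algebra is immediate, so I expect no substantive obstacle beyond this careful tracking of position parities.
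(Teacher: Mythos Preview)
Your argument is correct: the parity-swap observation for a single prepended letter yields $Z(yx)=[\,y=0\,]-Z(x)$, and iterating once gives all of (c)--(e). The paper itself does not supply a proof of this proposition; it is stated as an immediate observation (``Note that the following equalities hold for any binary word $x$''), so your write-up is more detailed than what the paper provides but follows the same obvious parity bookkeeping.
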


Observe that  a  vertex from $S_{m}^{(0)}$  is obtained by adding the  prefix $1$ to a  vertex from  $S_{m-1}^{(0)}$ or the  prefix $0$ to a red vertex from  $R_{m-1}^{(1)}$.
A red vertex from $R_{m}^{(s)}$, $1 \leq s \leq  \lfloor m/2 \rfloor$  is obtained by adding the  prefix $1$ to a green vertex from  $G_{m-1}^{(s)}$ or the  prefix $0$ to a green vertex from  $G_{m-1}^{(s-1)}$ (when $s \geq 2$) or to a vertex from $S_{m-1}^{(0)}$ (when $s=1$).
Similarly,  a green vertex from $G_{m}^{(s)}$, $1 \leq s \leq  \lfloor m/2 \rfloor$  is obtained by adding the  prefix $1$ to a red vertex from  $R_{m-1}^{(s)}$ or the  prefix $0$ to a red vertex from  $R_{m-1}^{(s+1)}$ when $s< \lfloor m/2 \rfloor$.

 In what follows, we will prove that the subdigraphs of ${\cal D}_m^*$ induced by  $S_{m}^{(s)} $  ($0 \leq s \leq \lfloor m/2 \rfloor $),  labeled as $ \langle S_{m}^{(s)} \rangle_{{\cal D}_m^*}$, are $ {\cal A}_m^*$ (the component which contains $1^m$), ${\cal B}^{*(1)}_{m}$,  ${\cal B}^{*(2)}_{m}, \ldots , $ $ {\cal B}^{*(\lfloor     m/2 \rfloor )}_{m}$ (which satisfy  \  $ \mid V({\cal B}^{*(1)}_{m}) \mid  \geq
  \mid V({\cal B}^{*(2)}_{m}) \mid  \geq  \ldots $  $\geq \mid V( {\cal B}^{*(\lfloor    m/2 \rfloor )}_{m}) \mid $), respectively, and that each  ${\cal B}_m^{*(s)}$ is the bichromatic (hence bipartite) digraph $(R_m^{(s)}, G_m^{(s)})$ ($s \geq 1$).

\begin{df} \label{def:nova1}
 We refer to
the zero-word  $Q_{2k}^{(0)} \stackrel{\rm def}{=} 0^{2k} \in S_{2k}^{(0)} $, the words   $Q_{2k}^{(s)} \stackrel{\rm def}{=} (01)^s0^{2k-2s} \in R_{2k}^{(s)}$
 ($1 \leq s \leq k $) and
$Q_{2k+1}^{(s)} \stackrel{\rm def}{=} (01)^{s-1}0^{2k-2s+3} \in R_{2k+1}^{(s)}$ ($1 \leq s \leq k+1$) as  the {\bf {\em queens}}.
The words  $(10)^s0^{2k-2s}\in S_{2k}^{(s)} $ ($0 \leq s \leq k $) are  called  the  {\bf {\em maidens}} and labeled as $M_{2k}^{(s)}$.
Similarly, the word  $L_{2k+1}^{(k)} \stackrel{\rm def}{=}(10)^{k}1 \in S_{2k+1}^{(k)}$  and the words
 $L_{2k+1}^{(s)} \stackrel{\rm def}{=}(10)^{s+1}0^{2k-2s-1} \in S_{2k+1}^{(s)}$ ($0 \leq s < k $) are  called the  {\bf {\em court ladies}}.
\end{df}

 Note that $M_{2k}^{(0)}\equiv Q_{2k}^{(0)} $,
 $M_{2k}^{(s)} \in G_{2k}^{(s)}$ and $L_{2k+1}^{(s)} \in G_{2k+1}^{(s)}$ where $s \geq 1$.
In this way, when $s \geq 1$ we have provided  the representatives for the red and green sets: the queens $Q_{m}^{(s)}$ ($m \in N$) for the first ones,  and the maidens  $M_{2k}^{(s)}$ ($k \in N$) and the court ladies  $L_{2k+1}^{(s)}$ ($k \in N$) for the second ones. In these cases, we treat the queen $Q_{m}^{(s)}$ as the  main representative for the  entire set $S_{m}^{(s)}$. Find that the only set $S_{m}^{(s)}$ without a queen is $S_{m}^{(0)}$ when $m$ is odd. In this case, the court lady $L_{m}^{(0)}  \equiv10^{m-1}$ becomes the  main representative for $S_{m}^{(0)}$, while  when  $m$ is even this role takes the queen $Q_{m}^{(0)}\equiv M_{m}^{(0)} = 0^m$.

If we  add the prefix $0$  to the maiden  $M_{2k}^{(s)} \in S_{2k}^{(s)}$ ($0 \leq s \leq k $), she becomes the queen in $R_{2k+1}^{(s+1)}$. If we add the prefix $1$ to the queen $Q_{2k}^{(s)}$ ($0 \leq s \leq k $), she becomes  the court lady $L_{2k+1}^{(s)} \in S_{2k+1}^{(s)} $.
 Reversible ``aging'' process, i.e. ``rejuvenation'' arises during the  forming stage  of  the representatives for the  red and green subsets of $S_{2k}^{(s)}$, where $s \geq 1$ ($k \in N$). Namely, by adding $1$ as a prefix to the queen  $Q_{2k-1}^{(s)} \in R_{2k-1}^{(s)} $, where $ s \geq 1$,  we obtain the maiden $M_{2k}^{(s)} \in G_{2k}^{(s)}$.   Additionally, the queen $Q_{2k}^{(s)} \in R_{2k}^{(s)}$ ($1 \leq s \leq k  $) is obtained
  by adding $0$ as a prefix to  the court lady $L_{2k-1}^{(s-1)} \in S_{2k-1}^{(s-1)} $.
  \\  This  rule does not apply for  the queen from $S_{2k}^{(0)}$. In fact, she arises from the queen from  $R_{2k-1}^{(1)}$, i.e. $Q_{2k}^{(0)} = 0^{2k}= 0Q_{2k-1}^{(1)}  \in S_{2k}^{(0)}$.

\begin{df} \label{def:nova2}
The word $\overline{Q}_m^{(s)}  $ ($0 \leq s \leq \lfloor m/2 \rfloor $) is called the {\bf {\em king}} and  labeled as $K_{m}^{(s)}$.
\end{df}

 Clearly, for $s=0$ we have $K_{2k}^{(0)}  \equiv Q_{2k}^{(0)}  \equiv M_{2k}^{(0)} = 0^{2k}$ ($k \in N$).
When $s \geq 1$, in case  $m = 2k$ ($k \in N$), $K_{2k}^{(s)} = 0^{2k-2s}(10)^{s}  \in G_{2k}^{(s)} $, while  in case   $m = 2k+1$ ($k \in N$), $K_{2k+1}^{(s)}  \in R_{2k+1}^{(s)} $ because $K_{2k+1}^{(s)} = 0^{2k-2s+3}(10)^{s-1}  \in R_{2k+1}^{(s)}$. (When the maiden is   present in $S^{(s)}_{m}$ ($s \geq 1$), then the king takes ``the same side'' (color) as the maiden, but if the  court lady is  present in $S^{(s)}_{m}$, then he takes ``the opposite side'' (color) of  the  court lady.)

 \begin{lem}  \label{lem:2} The court lady $L_{2k-1}^{(0)} \in S_{2k-1}^{(0)}$ ($k \in N $) and  any queen $Q_{2k-1}^{(s)} \in S_{2k-1}^{(s)}$  ($1 \leq s   \leq k-1$)  can not be  connected  by a directed walk in ${\cal D}_{2k-1}^*$. The same is valid  for any two queens   $Q_{m}^{(s_1)}$ and $Q_{m}^{(s_2)}$ where $0 \leq s_1 < s_2 \leq \lfloor \frac{m}{2} \rfloor$ ($m \in N$).
 \end{lem}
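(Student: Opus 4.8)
The plan is to exhibit a quantity that is preserved along every arc of ${\cal D}_m^*$ and then to observe that the words compared in the lemma realize pairwise distinct values of it. The quantity is $|Z(\cdot)|$; in fact I would prove the sharper statement that $Z$ \emph{reverses sign} along every single arc of ${\cal D}_m^*$, which trivially makes $|Z(\cdot)|$ constant along directed walks.

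The first step is to translate an arc of ${\cal D}_m^*$ back into ${\cal D}_m$. Each code letter records which two of the (at most four) edges incident to a vertex belong to the $2$-factor; combining this with the Column conditions and the reflection rule ($\overline{a}=c$, $\overline{d}=f$, $\overline{e}=e$, \dots) shows that $\{a,c,e\}$ are exactly the letters whose vertex uses its rightward edge and $\{d,e,f\}$ exactly those whose vertex uses its leftward edge. Hence, for an alpha-word $\gamma$, the outlet word $o(\gamma)$ is precisely the rightward-edge indicator pattern $R(\gamma)$ of the column $\gamma$, while the condition defining an arc $\alpha\to\beta$ of ${\cal D}_m$ — that $(\alpha_i,\beta_i)$ be an arc of ${\cal D}_{lr}$ for every $i$, i.e. that the shared horizontal edge at row $i$ is used consistently, $[\alpha_i\text{ uses R}]\Leftrightarrow[\beta_i\text{ uses L}]$ — says exactly that $o(\alpha)$ equals the leftward-edge indicator pattern $L(\beta)$ of $\beta$. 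Consequently, if $v\to u$ is an arc of ${\cal D}_m^*$, then there is an alpha-word $\gamma$ of length $m$ with $L(\gamma)=v$ and $R(\gamma)=u$.

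Next I would prove the identity $Z(L(\gamma))+Z(R(\gamma))=0$ for every alpha-word $\gamma$ of length $m$. Encode $\gamma$ by its vertical-edge pattern $e_0=0,e_1,\dots,e_{m-1},e_m=0$, where $\gamma_i$ uses its upward (resp. downward) edge iff $e_{i-1}=1$ (resp. $e_i=1$), and $e_0=e_m=0$ by the Column conditions. Since each letter uses exactly two incident edges, $L_i(\gamma)+R_i(\gamma)=2-e_{i-1}-e_i$ for every $i$. Writing $s_i=(-1)^{i+1}$ and noting $Z(w)=\sum_{i=1}^m s_i(1-w_i)$ directly from the definition of $Z$, we get
$$Z(L(\gamma))+Z(R(\gamma))=\sum_{i=1}^{m}s_i\big(2-L_i(\gamma)-R_i(\gamma)\big)=\sum_{i=1}^{m}s_i\,(e_{i-1}+e_i)=s_m e_m-s_0 e_0=0,$$
the last sum telescoping since $s_i=-s_{i-1}$. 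Combined with the previous step, every arc $v\to u$ of ${\cal D}_m^*$ satisfies $Z(u)=-Z(v)$, so $|Z(\cdot)|$ is constant along every directed walk in ${\cal D}_m^*$.

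The lemma now follows at once: $Z(Q_m^{(s)})=s$ for every queen (by definition $Q_m^{(s)}\in R_m^{(s)}$) and $Z(L_{2k-1}^{(0)})=0$ (since $L_{2k-1}^{(0)}\in S_{2k-1}^{(0)}$); as $0\neq s$ for $1\le s\le k-1$ and $s_1\neq s_2$ for two distinct queens, in each case the two words have different values of $|Z(\cdot)|$ and hence cannot be joined by a directed walk in ${\cal D}_m^*$. I expect the delicate part to be the first step — making the correspondence ``arc of ${\cal D}_m^*$ $\longleftrightarrow$ alpha-word with prescribed left- and right-edge patterns'' watertight out of the definitions of ${\cal D}_{lr}$, ${\cal D}_{ud}$ and the outlet word — whereas the telescoping computation itself is routine.
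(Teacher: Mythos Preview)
Your argument is correct and takes a genuinely different route from the paper. The paper proves Lemma~\ref{lem:2} by a case-by-case geometric parity argument: it assumes a directed walk $v_0\to\cdots\to v_k$ between two representatives exists, interprets it as a spanning union of open paths and cycles in the grid $P_m\times P_k$, two-colors the grid vertices, and in four separate cases (depending on the parities of $m$, $k$, $s_1$, $s_2$) derives a contradiction by comparing the color excess forced by the open paths' endpoints with the color balance of the grid. Your approach instead isolates the arc-local invariant directly: from the adjacency condition in ${\cal D}_{lr}$ you extract that an arc $v\to u$ of ${\cal D}_m^*$ is witnessed by a single alpha-word $\gamma$ with $L(\gamma)=v$, $R(\gamma)=u$, and the degree-two constraint $L_i+R_i+e_{i-1}+e_i=2$ together with $e_0=e_m=0$ makes $Z(L(\gamma))+Z(R(\gamma))=0$ a one-line telescoping identity.

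What your approach buys is considerable. It proves strictly more than Lemma~\ref{lem:2}: the sign-reversal $Z(u)=-Z(v)$ along every arc immediately yields both that no arc joins $S_m^{(s)}$ to $S_m^{(t)}$ for $s\ne t$ and that $R_m^{(s)}$, $G_m^{(s)}$ are stable sets --- two of the three assertions the paper establishes separately (and by induction on $m$) in Lemma~\ref{thm:1}. The paper's chromatic argument, by contrast, only separates the chosen representatives and then relies on the later strong-connectivity result to upgrade this to a statement about the whole sets. The one point worth tightening in your write-up is the translation step you yourself flag: make explicit that $\{a,c,e\}$ are exactly the right-edge letters (from the last-column condition for $RG_m(n)$) and $\{d,e,f\}$ the left-edge letters (from the first-column condition), so that for the arc $\alpha\to\beta$ in ${\cal D}_m$ one has $o(\alpha)=R(\alpha)=L(\beta)$ and $o(\beta)=R(\beta)$, whence $\gamma=\beta$ works.
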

\begin{proof}
Recall that  this statement is trivially  valid for the main representatives of  $S_{m}^{(s_1)}$ and $S_{m}^{(s_2)}$ ($0 \leq s_1 < s_2 \leq \lfloor \frac{m}{2} \rfloor$)
when   $s_1$ and  $s_2$    (i.e.  the numbers of their 1's) have   opposite parity.

 The remaining  cases are discussed below.
In all cases we give indirect proofs. Thus, we suppose the opposite, that in ${\cal D}_m^*$ there exists a directed walk $v_0 \rightarrow v_1 \rightarrow v_2 \rightarrow \ldots \rightarrow v_{k-1} \rightarrow v_k$ of length $k \in N$, where   $ v_0$ and $ v_k$ are the  main representatives of the two considered  sets (no matter in which direction).

The corresponding part of the grid for the directed  walk  from $v_1$ to $v_k$ has $m \cdot k$ vertices in the grid (see  Figure~\ref{NPm}). Note that this rectangular grid graph is bichromatic (we color  its vertices in gray and black). In this grid the directed walk $v_0 \rightarrow v_1 \rightarrow v_2 \rightarrow \ldots \rightarrow v_{k-1} \rightarrow v_k$ determines a spanning union of paths  (open paths and   cycles). In this union  each   cycle (if exists)  has the same number of  vertices of both colors.
 Further, when  $m\cdot k$ is even,  then the numbers  of gray and black vertices of the  grid $P_m \times P_k$  are the same.  Otherwise,  when  $m\cdot k$ is odd, these numbers
  differ by $1$, the all four corner vertices (with the degree $2$) are of the same color -  the one whose vertices are more numerous.

\begin{figure}[htb]
\begin{center}
\includegraphics[width=0.90\columnwidth]{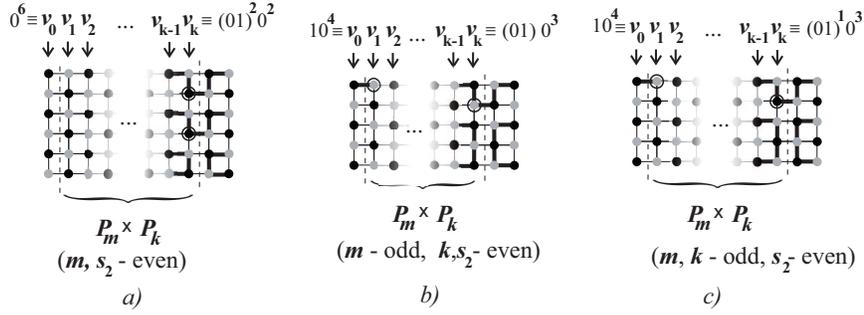}
\\ \ \vspace*{-18pt}
\end{center}
\caption{  The main representatives of  $S_{m}^{(0)}$ and $S_{m}^{(s_2)}$  ($ 1 \leq s_2 \leq \lfloor \frac{m}{2} \rfloor$) can not be  connected  by a directed walk in ${\cal D}_m^*$.}
\label{NPm}
\end{figure}

\underline{Case I: $s_1 = 0$, $m$ and $s_2 $ $(s_2 >0)$ are even.}
\\ The directed walk $v_0 \rightarrow v_1 \rightarrow v_2 \rightarrow \ldots \rightarrow v_{k-1} \rightarrow v_k$ determines
a spanning union of   $\ds \frac{s_2}{2}$ open paths and  a few  cycles or without them.
The end vertices of these open paths  (rounded in  Figure~\ref{NPm} a)           belong to the last column of  the  grid $P_m \times P_k$.
They  are of the  same color and  related to 1's  of the queen $v_k= Q_{m}^{(s_2)}= (01)^{s_2}0^{m-2s_2}$.
Consequently, the numbers of gray and black vertices in the considered grid graph $P_m \times P_k$ differ by $\ds \frac{s_2}{2}$.
But, these numbers must be equal because $m \cdot k$ is even. Contradiction.

\underline{Case II: $s_1 = 0$, $m$  is  odd and $s_2$ $(s_2 >0)$ is even.} \\
In the directed walk $v_0 \rightarrow v_1 \rightarrow v_2 \rightarrow \ldots \rightarrow v_{k-1} \rightarrow v_k$ we have  $v_0 = L_{m}^{(0)} =10^{m-1}$ and
$v_k = Q_{m}^{(s_2)} =(01)^{s_2 -1}0^{m-2s_2+2}$.  \\
If $k$ is even  (see  Figure~\ref{NPm} b), the end vertices of all $\ds \frac{s_2}{2}$  ($\ds \frac{s_2}{2} \geq 1$) open paths are of the same color which  implies that in the  grid graph $P_m \times P_k$ the number of vertices of this color is greater than the one of opposite color which  is impossible because $m \cdot k$ is even. \\
If $k$ is odd (see  Figure~\ref{NPm} c),  exactly one open path has end vertices of different colors.
The end vertices of the remaining  $\ds \frac{s_2-2}{2}$  ($\ds \frac{s_2-2}{2} \geq 0$) open paths are all of the same color. Without loss of generality, let us say this color is  black. We conclude that the number of black vertices is  greater than or equal to the number of gray vertices. On the other hand, since  $m \cdot k$ is odd, in the considered grid the number of gray vertices (among them are corner ones) must be greater by $1$ than the number of black vertices.  Contradiction.

\underline{Case III: $s_2 > s_1 > 0$ and $k$ is even. } \\
The considered  grid graph has the same number of vertices of each color ($m \cdot k$ is even). However, its spanning graph has
 $\ds \frac{s_1+s_2}{2}$ open paths, when $m$ is even, and  $\ds \frac{s_1+s_2}{2} -1$ open paths, when $m$ is odd.
 These open paths cover $\ds \frac{s_2 - s_1}{2}\geq 1$  more vertices of one color than  another (see Figure~\ref{NP} a). Contradiction.

\begin{figure}[htb]
\begin{center}
\includegraphics[width=0.80\columnwidth]{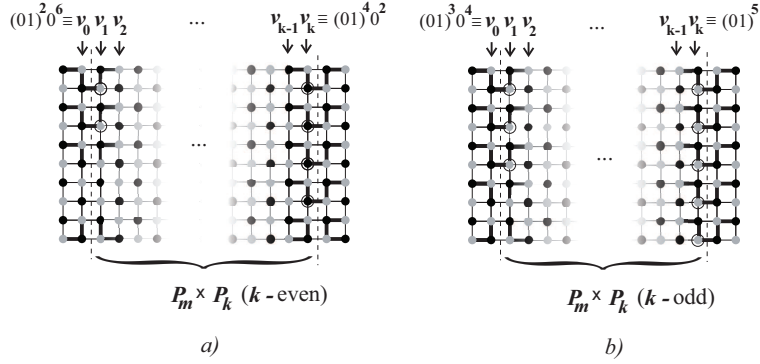}
\\ \ \vspace*{-18pt}
\end{center}
\caption{Two queens from   $S_{m}^{(s_1)}$ and $S_{m}^{(s_2)}$  ($ 1 \leq s_1 \leq s_2 \leq \lfloor \frac{m}{2} \rfloor$) can not be  connected  by a directed walk in ${\cal D}_m^*$.}
\label{NP}
\end{figure}

\underline{Case IV: $s_2 > s_1 > 0$ and $k$ is odd. } \\
If $k$ is odd, then all open paths have end vertices of the same color, let us say gray (see Figure~\ref{NP} b).
Consequently, gray vertices are more numerous. \\
 But, the upper  corner vertices are black. This   implies that the number of the black vertices is greater by $1$ (when $m$ is odd) or equal to  the number of gray vertices (when $m$ is even). Contradiction.
$\Box$ \end{proof}

We need a few assertions that  can be easily  obtained from the definition of 2-factor for the linear  grid graph  $G$.

\begin{prp} \label{r0}  For $v, w \in V({\cal D}_{m_1}^*)$ and $x, y \in V({\cal D}_{m_2}^*)$ ($m_1, m_2 \in N$)

a) if $v \leftrightarrow w$  and $x \leftrightarrow y$, then
$vx \leftrightarrow wy$ in ${\cal D}_{m_1 + m_2}^*$,

b) if $vx \leftrightarrow wy$ in ${\cal D}_{m_1 + m_2}^*$ and $v \leftrightarrow w$, then $x \leftrightarrow y$.
\end{prp}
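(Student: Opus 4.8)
The plan is to translate the relation $\leftrightarrow$ in ${\cal D}^*_m$ into a purely combinatorial condition on the two binary outlet words, after which both assertions become a one‑line, left‑to‑right bookkeeping. The key step — and this is what makes the Proposition ``easily obtained from the definition'' — is the following dictionary: for $v,w\in\{0,1\}^m$, $v\leftrightarrow w$ in ${\cal D}^*_m$ holds if and only if the path $P_m$ has a spanning subgraph in which the $i$‑th vertex has degree $2-v_i-w_i$, $1\le i\le m$. Indeed, if columns $\alpha\to\beta$ in ${\cal D}_m$ realise $v\to w$, then in $\beta$ the $i$‑th vertex carries a left edge exactly when $v_i=1$ (the ${\cal D}_{lr}$‑arc condition together with $o(\alpha)=v$) and a right edge exactly when $w_i=1$ ($o(\beta)=w$); since every code letter has precisely two incident $2$‑factor edges, it uses $2-v_i-w_i$ vertical edges there, and the Column conditions say exactly that these vertical edges form a subgraph of $P_m$, i.e.\ a disjoint union of subpaths. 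Conversely, such a subgraph yields a valid column $\beta$ with outlet $w$ whose left edges sit precisely at the positions with $v_i=1$ and — the degree sequence being symmetric in $v,w$ — also a valid column $\alpha$ with outlet $v$; one checks $\alpha\to\beta$, so $v\leftrightarrow w$, and in particular $v,w\in V({\cal D}^*_m)$.

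Two trivialities then do the rest. First, a word $d_1\cdots d_m\in\{0,1,2\}^m$ is the degree sequence of some subgraph of $P_m$ precisely when it is accepted by the deterministic scan that carries a bit $p$ (``currently inside an unfinished subpath''), begins at $p=0$, on reading a $0$ requires $p=0$, on reading a $2$ requires $p=1$, on reading a $1$ flips $p$, and accepts iff $p=0$ at the end — the accepting state being equal to the start state. Second, the degree word $i\mapsto 2-(vx)_i-(wy)_i$ attached to the pair $(vx,wy)$ is literally the concatenation of the degree words attached to $(v,w)$ and to $(x,y)$, since $(vx)_i=v_i,(wy)_i=w_i$ for $i\le m_1$ and $(vx)_{m_1+i}=x_i,(wy)_{m_1+i}=y_i$ for $1\le i\le m_2$.

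Part a) is now immediate: the scans on the $(v,w)$‑word and on the $(x,y)$‑word both accept, so each finishes in state $p=0$, which is where each one starts; hence the scan on their concatenation — the $(vx,wy)$‑word, by the second observation — also accepts, that is, $vx\leftrightarrow wy$ (and $vx\in V({\cal D}^*_{m_1+m_2})$ by the last clause of the dictionary). Part b) is the same reasoning run backwards: the scan on the $(vx,wy)$‑word accepts; by determinism the state reached after its prefix, the $(v,w)$‑word, equals the terminal state of the scan on the $(v,w)$‑word alone, which is $p=0$ since that scan accepts; therefore the remaining $(x,y)$‑word is scanned starting from $p=0$, never dies, and terminates at $p=0$ because the whole run does, so the scan on the $(x,y)$‑word is itself accepting; hence $x\leftrightarrow y$.

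The only step requiring genuine care is the dictionary of the first paragraph: verifying, case by case over the six code letters and the arcs of ${\cal D}_{ud}$ and ${\cal D}_{lr}$, that ``valid column with outlet $w$ and left edges exactly at the positions with $v_i=1$'' and ``vertical‑edge set equal to a subgraph of $P_m$ with degree sequence $2-v_i-w_i$'' carry the same information; everything afterward is bookkeeping. One could instead prove a) by hand, concatenating realising column pairs: $\alpha\to\beta$ in ${\cal D}_{m_1}$ and $\gamma\to\delta$ in ${\cal D}_{m_2}$ give $\alpha\gamma\to\beta\delta$ in ${\cal D}_{m_1+m_2}$, the seam at row $m_1$ being free because the last letter of a valid column never carries a downward edge and its first letter never an upward one. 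For b), however, the seam cannot be controlled so cheaply, and either the structural input above or an explicit surgery on a realising pair $\mu\to\nu$ — rebuilding, with the help of $v\leftrightarrow w$, the top $m_1$ rows so that no vertical edge crosses row $m_1$ — seems unavoidable.
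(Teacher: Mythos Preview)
Your proof is correct. The paper does not actually prove this proposition; it merely introduces it among ``a few assertions that can be easily obtained from the definition of 2-factor for the linear grid graph $G$'' and moves on. What the authors presumably had in mind is the direct concatenation argument you sketch at the end of your write-up: given realising column pairs $\alpha\to\beta$ in ${\cal D}_{m_1}$ and $\gamma\to\delta$ in ${\cal D}_{m_2}$, the words $\alpha\gamma$ and $\beta\delta$ are valid columns (the seam causes no trouble because the last letter of a valid column is in $\{c,e,f\}$ and the first is in $\{a,d,e\}$), and the ${\cal D}_{lr}$ condition holds row by row. This handles a) in one line and is closer in spirit to the paper's ``easily obtained'' remark.

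Your automaton reformulation is a genuinely different and somewhat cleaner route, especially for b). The dictionary you set up --- that $v\leftrightarrow w$ is equivalent to realisability of the degree sequence $(2-v_i-w_i)_i$ on $P_m$ --- is correct and, once in place, makes b) a transparent determinism argument rather than a surgery on an explicit realising pair. The cost is the upfront verification of the dictionary; the gain is that both parts become symmetric and mechanical, and you avoid having to argue separately that cutting a realising pair for $vx\leftrightarrow wy$ at row $m_1$ can be repaired into a realising pair for $x\leftrightarrow y$. Either approach is adequate here, but yours scales better if one later needs finer control over the structure of ${\cal D}_m^*$.
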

\begin{prp} \label{r1} $   $ \\
A direct predecessor and successor for a word from $V({\cal D}_m^*)$ having

 a)  a prefix $0$ must have a prefix $1$,

 b) a prefix $01$ must have a prefix $10$.
\end{prp}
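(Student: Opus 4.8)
The plan is to lift the statement from the quotient digraph ${\cal D}_m^*$ back to the digraph ${\cal D}_m$ of alpha-words, where an arc is admissible position by position according to ${\cal D}_{lr}$, while the internal shape of a column is constrained by ${\cal D}_{ud}$ and the \emph{Column conditions}. Because ${\cal T}^*_m$ is symmetric, a direct predecessor of a word $w$ in ${\cal D}_m^*$ is the same as a direct successor of $w$, so it is enough to prove: whenever $\alpha\to\beta$ is an arc of ${\cal D}_m$ and $o(\beta)$ begins with $0$ (respectively $01$), then $o(\alpha)$ begins with $1$ (respectively $10$). (If one prefers not to invoke the symmetry of ${\cal T}^*_m$, the arc $\beta\to\alpha$ is handled by an entirely parallel case check, using the first-row restriction once more.) I will use the following facts, read off from Figure~\ref{CvorniKod}: in ${\cal D}_{lr}$ the in-neighbours of the letter $d$ are exactly $\{a,c,e\}$ (the three letters with outlet $1$) and the in-neighbours of the letter $c$ are exactly $\{b,d,f\}$ (the three letters with outlet $0$); and in ${\cal D}_{ud}$ the out-neighbours of $d$ are exactly $\{b,c,f\}$.

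For part (a), write $\alpha=\alpha_1\alpha_2\cdots\alpha_m$ and $\beta=\beta_1\beta_2\cdots\beta_m$. From $o(\beta_1)=0$ we have $\beta_1\in\{b,d,f\}$, and since $\beta_1$ is the first letter of the alpha-word $\beta$ the Column conditions force $\beta_1\in\{a,d,e\}$; hence $\beta_1=d$. As $(\alpha_1,\beta_1)=(\alpha_1,d)$ must be an arc of ${\cal D}_{lr}$, the letter $\alpha_1$ is an in-neighbour of $d$, so $\alpha_1\in\{a,c,e\}$ and $o(\alpha_1)=1$; thus $o(\alpha)$ begins with $1$. For part (b), keep $\beta_1=d$ and add the hypothesis $o(\beta_2)=1$, i.e. $\beta_2\in\{a,c,e\}$. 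Since $(\beta_1,\beta_2)=(d,\beta_2)$ is an arc of ${\cal D}_{ud}$, the letter $\beta_2$ is an out-neighbour of $d$, so $\beta_2\in\{b,c,f\}$; together with $\beta_2\in\{a,c,e\}$ this gives $\beta_2=c$. Now $(\alpha_2,\beta_2)=(\alpha_2,c)$ is an arc of ${\cal D}_{lr}$, so $\alpha_2$ is an in-neighbour of $c$, whence $\alpha_2\in\{b,d,f\}$ and $o(\alpha_2)=0$. Combined with part (a), $o(\alpha)$ begins with $10$, which is what had to be shown.

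I do not expect a genuine obstacle: the argument is a short bounded case analysis. The two small points that need attention are invoking the Column conditions to collapse $\{b,d,f\}$ to the single letter $d$ in the first position, and respecting the orientation of ${\cal D}_{lr}$, since $d$ has in-neighbours $\{a,c,e\}$ but out-neighbours $\{a,b,c\}$ there, so which endpoint of the realizing arc equals $d$ matters. Once these are settled, the forced values $\beta_1=d$ and $\beta_2=c$ pin down the first two bits of $o(\alpha)$ and complete the proof.
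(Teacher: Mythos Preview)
Your argument is correct and is exactly the kind of short case analysis the paper has in mind when it says that Propositions~\ref{r0}--\ref{r2} ``can be easily obtained from the definition of 2-factor''; the paper itself gives no explicit proof of Proposition~\ref{r1}. Your lift from ${\cal D}_m^*$ to ${\cal D}_m$, the use of the Column condition to force $\beta_1=d$, and the subsequent reading of in-neighbours in ${\cal D}_{lr}$ and out-neighbours in ${\cal D}_{ud}$ are all sound and match the intended reasoning.
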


\begin{prp} \label{r2} $   $ \\
If a word $w$ from $V({\cal D}_m^*)$ has a prefix $00$ and $ w \leftrightarrow v$ for some $v \in V({\cal D}_m^*)$,
 then for the word $u$ obtained from $w$ by replacing that prefix with $11$ we have $ u \leftrightarrow v$, too.
\end{prp}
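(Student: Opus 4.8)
The plan is to unfold the definition of $\mathcal{D}_m^*$ and to modify a witnessing configuration only in its first two rows. Recall that an arc $p\to q$ of $\mathcal{D}_m^*$ exists precisely when there are alpha-words $P,Q\in V(\mathcal{D}_m)$ with $o(P)=p$, $o(Q)=q$ and $(\alpha_i^{P},\alpha_i^{Q})\in E(\mathcal{D}_{lr})$ for every row $i$ (so that $P\to Q$ in $\mathcal{D}_m$). Since $\mathcal{T}_m^*$ is symmetric (\cite{DjBD}), it suffices to produce a single arc, say $v\to u$. So I would start from a realization $B\to A$ of the arc $v\to w$ (which exists because $w\leftrightarrow v$), writing $A=\alpha_1\alpha_2\cdots\alpha_m$ with $o(A)=w=00w'$ and $B=\beta_1\beta_2\cdots\beta_m$ with $o(B)=v$, and I would keep $\alpha_i,\beta_i$ for all $i\ge 3$, re-decoding only rows $1$ and $2$ into new letters $\delta_1,\delta_2$ and $\gamma_1,\gamma_2$, so as to obtain $D=\delta_1\delta_2\alpha_3\cdots\alpha_m$ with $o(D)=u=11w'$ and $C=\gamma_1\gamma_2\beta_3\cdots\beta_m$ with $o(C)=v$ satisfying $(\gamma_i,\delta_i)\in E(\mathcal{D}_{lr})$ for all $i$; this is exactly the arc $v\to u$.

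First I would read off what the prefix $00$ forces. By the Column conditions and $\mathcal{D}_{ud}$, $\alpha_1$ is the unique letter of $\{a,d,e\}$ with outlet $0$, so $\alpha_1=d$, and then $\alpha_2\in\{b,f\}$; hence the ``up-type'' of $\alpha_3$ is determined by which of $b,f$ equals $\alpha_2$. On the $B$-side, $(\beta_1,\alpha_1)\in E(\mathcal{D}_{lr})$ together with $\beta_1\in\{a,d,e\}$ yields $\beta_1\in\{a,e\}$, so $v_1=1$; continuing, $(\beta_1,\beta_2)\in E(\mathcal{D}_{ud})$ and the constraint coming from $(\beta_2,\alpha_2)$ confine $\beta_2$ to a small set, forcing $v_2=0$ if $\alpha_2=b$ and $v_2=1$ if $\alpha_2=f$, and fixing the ``up-type'' of $\beta_3$. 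Now I would branch on $\alpha_2$: if $\alpha_2=b$ put $\delta_1=e,\ \delta_2=a$, and if $\alpha_2=f$ put $\delta_1=e,\ \delta_2=e$. In either branch $o(\delta_1)=o(\delta_2)=1$, $\delta_1\in\{a,d,e\}$, and $(\delta_1,\delta_2),(\delta_2,\alpha_3)\in E(\mathcal{D}_{ud})$ (the chosen $\delta_2$ has exactly the down-type matching the already fixed up-type of $\alpha_3$), so $D\in V(\mathcal{D}_m)$ and $o(D)=u$. For $C$, the $\mathcal{D}_{lr}$-arcs $(\gamma_1,\delta_1),(\gamma_2,\delta_2)$ prescribe the right-type of $\gamma_1,\gamma_2$; together with $o(\gamma_1)=1$ and $o(\gamma_2)=v_2$ this leaves $\gamma_1\in\{a,e\}$ and $\gamma_2$ in a two-element set, and inside these sets the arcs $(\gamma_1,\gamma_2),(\gamma_2,\beta_3)\in E(\mathcal{D}_{ud})$ isolate a legitimate choice in each branch --- a short finite check. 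The Column conditions and the arcs of $C,D$ touching rows $\ge 3$ are inherited from $A,B$, so $C\to D$ in $\mathcal{D}_m$, i.e. $v\to u$ in $\mathcal{D}_m^*$; symmetry of $\mathcal{T}_m^*$ then gives $u\leftrightarrow v$.

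The main obstacle, I expect, is precisely that one cannot keep $B$ fixed: changing the outlet of rows $1,2$ from $0$ to $1$ destroys $\mathcal{D}_{lr}$-compatibility there, so $B$ must be re-decoded simultaneously with $A$, and the argument amounts to tracking the $\mathcal{D}_{ud}$- and $\mathcal{D}_{lr}$-incidences across the two altered rows. One also has to watch the degenerate case $m=2$, where rows $1,2$ are at the same time the first and the last row; but then $w=00$ forces $v=11=u$, and $\mathcal{D}_2^*$ carries a loop at $11$, so the claim is immediate --- and in the general construction this case is exactly the branch $\alpha_2=f$, in which $\delta_2=e$ and $\gamma_2\in\{c,e\}$ lie in $\{c,e,f\}$, so the last-row condition is respected. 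Finally, it is worth remarking that Propositions~\ref{r0} and~\ref{r1} do not shorten the proof, since $w=00w'\leftrightarrow v$ need not entail $00\leftrightarrow v_1v_2$ in $\mathcal{D}_2^*$ --- indeed $v$ may begin with $10$.
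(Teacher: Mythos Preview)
Your argument is correct, but it is considerably more laborious than the paper's and contains an unnecessary complication. The paper's proof is a three-line geometric edge swap: realize $w\to v$ inside an actual 2-factor of a grid, observe that the prefix $00$ of $w$ forces both vertical edges of the top unit square (between the $w$- and $v$-columns) to be present while both horizontal edges are absent, and swap these two pairs of edges; the result is again a 2-factor, the $w$-column now has outlet prefix $11$, and the $v$-column keeps its outlet. You instead work on the level of alpha-words and re-decode rows $1,2$ by hand via $\mathcal{D}_{ud}$ and $\mathcal{D}_{lr}$, which is a legitimate but longer route.

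Your ``main obstacle'' is not actually an obstacle: with your choices $\delta_1=e$ and $\delta_2\in\{a,e\}$, the left-edge status of $\delta_1,\delta_2$ coincides with that of $\alpha_1=d$ and $\alpha_2\in\{b,f\}$ (namely: $e$ and $d$ both have a left edge; $a$ and $b$ both lack one; $e$ and $f$ both have one). Hence $(\beta_i,\delta_i)\in E(\mathcal{D}_{lr})$ is equivalent to $(\beta_i,\alpha_i)\in E(\mathcal{D}_{lr})$ for $i=1,2$, so $B\to D$ holds with $C=B$ and no re-decoding of $B$ is needed. Recognizing this collapses your finite check to a single observation and brings your proof much closer in spirit to the paper's swap argument.
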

\begin{proof}
Observe a 2-factor of a linear  grid graph $G$ for which the words $w$ and $v$ are related to  adjacent columns in $G$.
The square of the  grid  corresponding to the prefix $00$ of $w$  has two vertical edges which   belong to the 2-factor.
If we replace these edges   with other two (horizontal) of the square, the obtained subgraph of $G$ is a 2-factor, too.
$\Box$ \end{proof}



\vspace{1.5cc}
\begin{center}
{\bf 3. PROOF OF  THEOREM~\ref{conj:1}}
\end{center}

\begin{lem}  \label{thm:1}
For $m \geq 2 $ and  $0 \leq s \leq \lfloor m/2 \rfloor $, the subdigraphs of  ${\cal D}^{*}_{m}$  induced by the sets $S_{m}^{(s)}$ are strongly connected.
Additionally, for $s\geq 1$ they are bipartite digraphs, i.e  $ \langle  S_{m}^{(s)} \rangle_{{\cal D}^{*}_{m}}  = (R_{m}^{(s)},  G_{m}^{(s)})$.
There is no edge which joins a vertex from $S_{m}^{(s)}$ to a vertex from $S_{m}^{(t)}$, where $ 0 \leq s < t \leq \lfloor m/2 \rfloor$, i.e.
$\ds {\cal D}^{*}_{m} = \bigcup_{s=0}^{\lfloor m/2 \rfloor} \langle  S_{m}^{(s)} \rangle_{{\cal D}^{*}_{m}} $.
\end{lem}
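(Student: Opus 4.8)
The plan is to prove the three assertions in the order: (i) the induced subdigraphs are pairwise ``separated'' (no arc between $S_m^{(s)}$ and $S_m^{(t)}$ for $s\neq t$), (ii) each $\langle S_m^{(s)}\rangle$ with $s\geq1$ is bichromatic with parts $R_m^{(s)}$, $G_m^{(s)}$, and (iii) each $\langle S_m^{(s)}\rangle$ is strongly connected. For (i), I would work directly from Proposition~\ref{prp:novi}. An arc of ${\cal D}^*_m$ sends a word with a given prefix to a word with a complementary prefix in a controlled way: recalling the digraph ${\cal D}_{lr}$, a position carrying $0$ in one column forces a $1$ in the neighboring column and vice versa only under certain pairings, and more usefully the structure of ${\cal D}_{ud}$ plus the adjacency-of-column condition constrains how $odd(\cdot)$ and $even(\cdot)$ can change. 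The cleanest route is induction on $m$ using the prefix decomposition already spelled out in the text (``a vertex from $S_m^{(0)}$ is obtained by adding prefix $1$ to a vertex of $S_{m-1}^{(0)}$ or prefix $0$ to a red vertex of $R_{m-1}^{(1)}$'', etc.) together with Proposition~\ref{r0}(b): if $v x \leftrightarrow w y$ and the length-$1$ or length-$2$ prefixes $v,w$ are themselves adjacent in the small digraph, then $x\leftrightarrow y$; combined with Propositions~\ref{r1} and \ref{r2} this lets me peel off a fixed-length prefix and reduce $m$. The base cases $m=2,3$ are Example~\ref{exm:n1} / Figure~\ref{D3_D4}. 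This shows $Z$ is invariant along arcs up to sign, hence $|Z|$ is an exact invariant, giving the decomposition $\ds {\cal D}^{*}_{m} = \bigcup_{s=0}^{\lfloor m/2 \rfloor} \langle S_{m}^{(s)} \rangle$ and, with Proposition~\ref{prp:novi}(b), that an arc flips the sign of $Z$, which is exactly the bipartiteness claim (ii): $R_m^{(s)}$ ($Z=s$) and $G_m^{(s)}$ ($Z=-s$) are the two color classes and every arc goes between them.

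For (iii), strong connectedness, I would fix the queen $Q_m^{(s)}$ (or the court lady $L_m^{(0)}$ when $m$ is odd and $s=0$) as the hub and show that every $v\in S_m^{(s)}$ can reach it and be reached from it; since the adjacency matrix is symmetric it suffices to exhibit, for each $v$, a directed walk from $v$ to the hub. This I would do by induction on $m$ again, exploiting the prefix structure: given $v\in S_m^{(s)}$, strip its first one or two letters to land in some $S_{m-1}^{(s')}$ (the bookkeeping from Proposition~\ref{prp:novi} tells us which $s'$), route that shorter word to its queen $Q_{m-1}^{(s')}$ inside the strongly connected $\langle S_{m-1}^{(s')}\rangle$ by the inductive hypothesis, and use Proposition~\ref{r0}(a) to lift the whole walk by re-attaching a fixed compatible prefix (e.g. the prefix $1$, since $1\leftrightarrow1$ in ${\cal D}_1^*$, or the prefix $00\leftrightarrow11$ from Proposition~\ref{r2}). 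The ``aging/rejuvenation'' correspondences between queens, maidens and court ladies described after Definition~\ref{def:nova1} are precisely the compatibility facts needed to guarantee that the lifted prefix together with $Q_{m-1}^{(s')}$ lands on (a neighbor of) $Q_m^{(s)}$; this is where I expect the bulk of the casework, split according to the parity of $m$ and whether $s=0$, $1\leq s\leq\lfloor m/2\rfloor-1$, or $s=\lfloor m/2\rfloor$ (the last being the near-singleton extreme set). Proposition~\ref{r1} handles the words beginning with $0$: their unique-type predecessor/successor has prefix $1$, so after one step we are back in the prefix-$1$ situation already treated.

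The main obstacle I anticipate is not the invariance of $|Z|$ — that is essentially forced by Proposition~\ref{prp:novi} plus the local structure of ${\cal D}_{lr}$ — but rather the strong-connectedness induction, specifically verifying that the prefix one re-attaches is simultaneously (a) adjacency-compatible in ${\cal D}^*_1$ or ${\cal D}^*_2$ with itself, and (b) carries the inductively obtained queen $Q_{m-1}^{(s')}$ to the correct target $Q_m^{(s)}$, across all parity/size cases and at the boundary value $s=\lfloor m/2\rfloor$ where the relevant sets degenerate to singletons and the generic argument has no room to maneuver. I would isolate those boundary cases and handle them by an explicit short walk (or by direct inspection of small $m$), and for the generic cases lean on Propositions~\ref{r0}--\ref{r2} to keep the reduction purely mechanical. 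Finally, the ordering $\ds |V({\cal B}^{*(1)}_m)|\geq\cdots\geq|V({\cal B}^{*(\lfloor m/2\rfloor)}_m)|$ and the cardinality formulas are a separate, purely enumerative matter (counting binary $m$-words with a prescribed value of $Z$ reduces to a product/convolution of binomial coefficients for the odd and even positions), which I would defer to after this lemma.
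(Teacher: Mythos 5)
Your plan hinges on the claim that every arc of ${\cal D}^*_m$ reverses the sign of $Z$, from which you draw both the separation of the sets $S_m^{(s)}$ and the bipartition $(R_m^{(s)},G_m^{(s)})$. That claim is true, but nothing you cite proves it: Proposition~\ref{prp:novi} describes how $Z$ changes when a letter is \emph{prepended} to a word (i.e.\ when $m$ changes), and says nothing about two words of the same length joined by an arc; ``the local structure of ${\cal D}_{lr}$'' is invoked but no argument is given. This is the central point of the whole lemma, so leaving it at the level of ``essentially forced'' is a genuine gap. It could be closed by a direct degree count on the column shared by the two outlet words ($v_i+u_i+x_{i-1}+x_i=2$ at every row, with no vertical edge above row $1$ or below row $m$, and the alternating sum telescopes to $0$, giving $Z(u)=-Z(v)$), but that computation is not in your proposal. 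Note that the paper does \emph{not} argue locally at all: it first proves strong connectivity of each $\langle S_m^{(s)}\rangle_{{\cal D}^*_m}$, proves stability of $R_m^{(s)}$ and $G_m^{(s)}$ by a separate induction (via Propositions~\ref{r0}--\ref{r2}), and only then rules out arcs between different $S_m^{(s)}$ by reducing, through strong connectivity, to the main representatives and applying the global two-coloring argument of Lemma~\ref{lem:2}. So your intended route for (i)--(ii) is genuinely different and would in fact be shorter than the paper's if the sign-flip fact were actually proved; as written, it is asserted rather than proved.

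The strong-connectivity part is also materially weaker than what is needed. Your scheme (strip a prefix, route the shorter word to its queen by induction, re-attach a prefix by Proposition~\ref{r0}a) does not land on $Q_m^{(s)}$: prepending $1$ to $Q_{m-1}^{(s)}$ produces a maiden or court lady, and connecting that vertex to the queen of $S_m^{(s)}$ requires exhibiting further arcs -- which is essentially the statement being proved; you acknowledge ``the bulk of the casework'' there but do not resolve it, including the degenerate boundary $s=\lfloor m/2\rfloor$. The paper avoids this entirely: it splits $S_m^{(s)}$ into ${\cal O}_m^{(s)}$ (prefix $1$), ${\cal J}_m^{(s)}$ (prefix $00$) and ${\cal K}_m^{(s)}$ (prefix $01$), observes that $\langle{\cal O}_m^{(s)}\rangle$ is isomorphic to the strongly connected $\langle S_{m-1}^{(s)}\rangle$, and then shows every $00v$ and $01v$ is adjacent to a vertex of ${\cal O}_m^{(s)}$ (namely $00v\leftrightarrow 11w$, $01v\leftrightarrow 10w$ for any neighbor $w$ of $v$, using $00\leftrightarrow 11$, $01\leftrightarrow 10$ and Proposition~\ref{r0}a), with no routing to distinguished representatives at all; the queens enter only later, in the separation step via Lemma~\ref{lem:2}. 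To make your proposal into a proof you would need either to supply the local sign-flip computation (which would also subsume bipartiteness) and replace the queen-to-queen lifting by an argument of the paper's simpler ``attach to the prefix-$1$ copy'' type, or else carry out in full the representative casework you have only sketched.
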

\begin{proof} The proof is by induction on $m$.
The statement for $m\leq 3$  (the base cases)  trivially holds (see Figure~\ref{D2iD2zvezda}, Figure~\ref{D3_D4} and Example~\ref{exm:n1}).
Let us  suppose that the statement is  true for all digraphs ${\cal D}^{*}_{w}$, when $w < m$  and prove it for $m  \geq 4$.

\vspace*{3mm}

\underline{$\langle S_{m}^{(s)} \rangle_{{\cal D}^{*}_{m}} $ ($0 \leq s \leq \lfloor m/2 \rfloor $) is strongly connected.} \\
We introduce the following sets: \\
$ {\cal O}_m^{(s)}  \stackrel{\rm def}{=} \{ 1v \mid v \in \{ 0,1 \}^{m-1}   \wedge \;  \mid Z(v) \mid = s \}$,
$ {\cal J}_m^{(s)}  \stackrel{\rm def}{=} \{ 00v \mid v \in \{ 0,1 \}^{m-2}   \wedge \;  \mid Z(v) \mid = s \}$ \\
and
$ {\cal K}_m^{(s)}  \stackrel{\rm def}{=} \{ 01v \mid v \in \{ 0,1 \}^{m-2}   \wedge \;  \mid Z(v) + 1 \mid = s \}$. Obviously,
$S_{m}^{(s)} = {\cal O}_m^{(s)} \cup {\cal J}_m^{(s)} \cup {\cal K}_m^{(s)}$.

Note that  $\langle {\cal O}_m^{(s)} \rangle_{{\cal D}^{*}_{m}}$ is isomorphic to  $\langle S_{m-1}^{(s)} \rangle_{{\cal D}^{*}_{m-1}}$.
Since the latter digraph is strongly connected by inductive hypothesis (abbreviated  I.H.),
 the same is  valid for the former digraph, too. Let us prove that each vertex $x \in
 {\cal J}_m^{(s)} \cup {\cal K}_m^{(s)}$ is a direct predecessor for a vertex from $ {\cal O}_m^{(s)}$.

 If $x = 00v \in  {\cal J}_m^{(s)} $ where $v \in S_{m-2}^{(s)}$ ($\mid Z(v) \mid = s $),
   then   by I.H. there exists $w \in S_{m-2}^{(s)}$  for which $v \leftrightarrow w$ in $\langle S_{m-2}^{(s)}\rangle_{{\cal D}^{*}_{m-2}}$ and $Z(w) = -Z(v)$, i.e.
   $\mid Z(w) \mid  =  \mid Z(v) \mid =s$.  Since   $00 \leftrightarrow 11$ in ${\cal D}^{*}_{2}$, Proposition~\ref{r0}a implies that
 $00v  \leftrightarrow 11w$ where $11w  \in {\cal O}_m^{(s)}$.

 If $x = 01v$ where $v  \in \{ 0,1 \}^{m-2} \wedge Z(v)= -1 \pm s $, then by I.H. there exists $w \in  \{ 0,1 \}^{m-2} $
 for which $v \leftrightarrow w$ (hence $Z(w)=-Z(v)$)  in $\langle S_{m-2}^{(\mid Z(v) \mid)} \rangle_{{\cal D}^{*}_{m-2}}$.
      Since $01 \leftrightarrow 10$ in ${\cal D}^{*}_{2}$ and $Z(10w)= -1+ Z(w)= -1 -Z(v)= \mp s$, using  Proposition~\ref{r0}a again, we conclude that $x= 01v \leftrightarrow 10w $ where  $ 10w \in   {\cal O}_m^{(s)}$.

\vspace*{3mm}

\underline{$\langle S_{m}^{(s)} \rangle_{{\cal D}^{*}_{m}} $ ($1 \leq s \leq \lfloor m/2 \rfloor $)  is the bipartite digraph $(R_{m}^{(s)},G_{m}^{(s)})$.}
\\
We need to prove that $R_{m}^{(s)}$ and $ G_{m}^{(s)}$ are stable sets, i.e.
  $\langle R_{m}^{(s)} \rangle_{{\cal D}^{*}_{m}}$ and $\langle G_{m}^{(s)} \rangle_{{\cal D}^{*}_{m}}$
are edgeless  digraphs. For this   purpose we consider the set $\{ x \in \{0,1\}^m \mid Z(x) = k\}$, where $k=s$ or $k=-s$.
It is    the union ${\cal O}_m^{(k)} \cup {\cal J}_m^{(k)} \cup {\cal K}_m^{(k)}$ where
\
$ {\cal O}_m^{(k)}  \stackrel{\rm def}{=}
  \{1v \mid  v\in \{0,1\}^{m-1}   \wedge   Z(v) = -k\} $, \\ $ {\cal J}_m^{(k)}  \stackrel{\rm def}{=}  \{00v \mid  v\in \{0,1\}^{m-2}   \wedge  \; Z(v) = k\}$   and
 $ {\cal K}_m^{(k)}  \stackrel{\rm def}{=}     \{01v \mid  v\in \{0,1\}^{m-2}  \wedge  \;  Z(v) = k-1\}$.
\\
 The digraph $\langle {\cal J}_m^{(k)} \cup {\cal K}_m^{(k)}  \rangle_{{\cal D}^{*}_{m}}$
  is an edgeless digraph
 because two  words with a prefix $0$  can not be neighbors  in ${\cal D}^{*}_{m}$ (Proposition~\ref{r1}a).
 \\   The digraph $\langle {\cal O}_m^{(k)}   \rangle_{{\cal D}^{*}_{m}}$ is also an empty digraph because
it is  isomorphic to $\langle \{ v\in \{0,1\}^{m-1}  \mid   Z(v) = -k\} \rangle_{{\cal D}^{*}_{m-1}}$, which is
$\langle G_{m-1}^{(s)}  \rangle_{{\cal D}^{*}_{m}}$ (for $k=s$) or $\langle R_{m-1}^{(s)}  \rangle_{{\cal D}^{*}_{m}}$ (for $k=-s$) (I.H.).

 It remains to prove that there are no arcs between $\langle {\cal O}_m^{(k)}   \rangle_{{\cal D}^{*}_{m}}$  and $\langle {\cal J}_m^{(k)}  \rangle_{{\cal D}^{*}_{1}}$, neither
between  $\langle  {\cal O}_m^{(k)}   \rangle_{{\cal D}^{*}_{m}}$  and $\langle  {\cal K}_m^{(k)} \rangle_{{\cal D}^{*}_{m}}$.
 Assume the  opposite:
 $$\ds (\exists x \in {\cal J}_m^{(k)})(\exists y \in {\cal O}_m^{(k)}) \; x \leftrightarrow y \; \mbox{   (\emph{Case I}) \ \   or \ \  }
  (\exists x \in {\cal K}_m^{(k)})(\exists y \in {\cal O}_m^{(k)}) \; x \leftrightarrow y  \; \mbox{    (\emph{Case II}).  }
 $$
\emph{ Case I:} $ x=00v \in  {\cal J}_m^{(k)}$, $y= 1w \in {\cal O}_m^{(k)} $ and $x \leftrightarrow y$. \\
Using Proposition~\ref{r2} we conclude that $11v \leftrightarrow 1w$, i.e.  $1v \leftrightarrow w$.
$Z(v) = Z(00v)=k$ implies  $Z(1v) = -k$. On the other hand $Z(w) = -k$ because of  $Z(1w) = k$. Since $1v \leftrightarrow w$, we conclude that $ \{v \mid  v\in \{0,1\}^{m-1}  \wedge Z(v) = -k\} $ is not a stable set. Contradiction with I.H.
\\
      \emph{ Case II:}  $x=01v \in  {\cal K}_m^{(k)} $ and $y \in {\cal O}_m^{(k)} = \{10w \mid  w\in \{0,1\}^{m-2}  \wedge Z(w) = k +1 \} \cup  \{11w \mid  w\in \{0,1\}^{m-2}  \wedge Z(w) = k \}  $ and $x \leftrightarrow y$.
\\ Taking in mind Proposition~\ref{r1}b and  Proposition~\ref{r0}b, we have $y =10w  $ and $v \leftrightarrow w$  where $Z(w) = k +1 $ and $Z(v) = k -1 $. Consequently, the digraphs inducted by
 $ \{v \in \{0, 1\}^{m-2}   \mid   Z(v) = k  +1 \}$  and
   $ \{v \in \{0, 1\}^{m-2}   \mid   Z(v) = k -1 \}$ are not disjoint in ${\cal D}_{m-2}^*$. Contradiction with I.H.

\vspace*{3mm}

\noindent
\underline{There is no arc connecting a vertex in $S_{m}^{(s)}$ to a vertex in $S_{m}^{(t)}$, where $ 0 \leq s < t \leq \lfloor m/2 \rfloor$.} \\
Assuming  the  opposite, from already stated strong connectivity of the digraphs $\langle S_{m}^{(s)} \rangle_{{\cal D}^{*}_{m}}$ and $\langle S_{m}^{(t)} \rangle_{{\cal D}^{*}_{m}}$
 we obtain that the main representatives of $S_{m}^{(s)}$ and $S_{m}^{(t)}$ are connected by a directed walk in ${\cal D}_m^*$
 which is in contrary to Lemma~\ref{lem:2}. Consequently, we have  $\ds {\cal D}^{*}_{m}= \bigcup_{s=0}^{\lfloor m/2 \rfloor} \langle  S_{m}^{(s)} \rangle_{{\cal D}^{*}_{m}} $.

\vspace*{3mm}

This completes the verification of the main statement for $m  \geq 4$. Hence, by induction the result is true for all integer $m \geq 2$, as required.
$\Box$ \end{proof}

\begin{lem}   \label{thm:2}
If  $v \in  {\cal S}^{(s)}_{m}$ ($0 \leq s \leq \lfloor m/2 \rfloor , m \in N$), then  the vertex $\overline{v}$ belongs to the same set $S^{(s)}_{m} $.
 Moreover, if $m$ is odd and $s>0$, then  the vertices $v$ and $\overline{v}$ are in the same class (of the same color).  If $m$ is even and  $s>0$, then the vertices $v$ and $\overline{v}$ are in the different  classes (of different colors).
\end{lem}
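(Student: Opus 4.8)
The plan is to make the operation $v \mapsto \overline{v}$ on binary words completely explicit and then read off how it acts on the statistic $Z$. First I would observe that the letter transformation $\alpha_j \mapsto \overline{\alpha}_j$ appearing in Definition~\ref{def:2-factor} sends $\{a,c,e\}$ into $\{a,c,e\}$ and $\{b,d,f\}$ into $\{b,d,f\}$, hence preserves the outlet value of every single letter; consequently, for a binary (outlet) word $v = v_1 v_2 \ldots v_m$ one has $\overline{v} = v_m v_{m-1} \ldots v_1$, i.e. $\overline{v}$ is simply the reversal of $v$. This reduces the whole lemma to a short count of $Z$ for a reversed word.

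Next I would carry out that count by distinguishing the parity of $m$. Writing $\overline{v}_i = v_{m+1-i}$: if $m$ is even then $i$ and $m+1-i$ have opposite parities, so the zeros in odd positions of $\overline{v}$ are exactly the zeros in even positions of $v$ and vice versa, giving $odd(\overline{v}) = even(v)$, $even(\overline{v}) = odd(v)$, hence $Z(\overline{v}) = -Z(v)$; if $m$ is odd then $i$ and $m+1-i$ have the same parity, so $odd(\overline{v}) = odd(v)$ and $even(\overline{v}) = even(v)$, hence $Z(\overline{v}) = Z(v)$. (Alternatively this is an easy induction peeling off the first and last letters via Proposition~\ref{prp:novi}, but the direct count is cleaner.)

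From this the lemma follows immediately. In either parity $\mid Z(\overline{v}) \mid = \mid Z(v) \mid = s$, so $\overline{v} \in S_m^{(s)}$; and since $\mid Z(\overline{v}) \mid \leq \lfloor m/2 \rfloor$, the word $\overline{v}$ is never the exceptional non-vertex $(01)^k 0$ (for which $\mid Z \mid = k+1$), so it really is a vertex of ${\cal D}_m^*$. For the colour statement, recall that for $s \geq 1$ we have $R_m^{(s)} = \{x : Z(x) = s\}$ and $G_m^{(s)} = \{x : Z(x) = -s\}$. When $m$ is odd, $Z(\overline{v}) = Z(v)$, so $\overline{v}$ retains the sign of $Z$ and lies in the same class as $v$; when $m$ is even, $Z(\overline{v}) = -Z(v) \neq Z(v)$ since $s > 0$, so $\overline{v}$ lies in the opposite class.

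I do not expect a real obstacle: the two points that need care are the identification of $\overline{v}$ with the reversal (which rests on the outlet map being invariant under the bar on letters) and keeping the odd/even bookkeeping straight. It is also worth noting in passing that $(01)^k 0$ is its own reversal, so the exceptional word is fixed by the operation and causes no trouble, and that for $s = 0$ there is nothing to say about colour because $\langle S_m^{(0)} \rangle_{{\cal D}_m^*}$ is not bipartite.
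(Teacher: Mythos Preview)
Your proof is correct and takes a genuinely different route from the paper's. The paper argues via the graph structure: given $v \in S_m^{(s)}$ with $s \geq 1$, it takes a directed walk $v \to v_1 \to \cdots \to v_t = Q_m^{(s)}$ to the queen (using the strong connectivity from Lemma~\ref{thm:1}), observes that the symmetry $u \to w \Rightarrow \overline{u} \to \overline{w}$ of ${\cal D}_m^*$ yields a walk $\overline{v} \to \overline{v}_1 \to \cdots \to K_m^{(s)}$ of the same length to the king, and then reads off the colour of $\overline{v}$ from the known colour of $K_m^{(s)}$ (red for $m$ odd, green for $m$ even) via the bipartite structure. The case $s=0$ is handled separately by a contrapositive. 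By contrast, you bypass the digraph entirely: once $\overline{v}$ is identified with the reversal of $v$, the parity bookkeeping on positions gives $Z(\overline{v}) = Z(v)$ for $m$ odd and $Z(\overline{v}) = -Z(v)$ for $m$ even, and the whole lemma drops out from the definitions of $R_m^{(s)}$ and $G_m^{(s)}$.

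Your argument is strictly more elementary and self-contained: it needs neither Lemma~\ref{thm:1} nor the queen/king representatives, and it handles $s=0$ and $s\geq 1$ uniformly. The paper's approach, on the other hand, makes explicit that $v \mapsto \overline{v}$ is an automorphism of the digraph carrying queens to kings, which is conceptually pleasant and ties into how $\overline{v}$ is actually used in Theorem~\ref{thm:stara} for the Moebius strip. Both are fine; yours is the cleaner proof of the lemma as stated.
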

\begin{proof}
  Let $ v \rightarrow v_1 \rightarrow v_2 \rightarrow  \ldots \rightarrow v_t = Q^{(s)}_{m}$ be a directed  walk of length $t$ $ (t \geq 0)$ which connects an arbitrary  vertex  $v \in  S^{(s)}_{m}$ where $s \geq 1$ to the queen in the same set. Then, the directed walk $ \overline{v} \rightarrow \overline{v}_1 \rightarrow \overline{v}_2  \rightarrow \ldots \rightarrow \overline{v}_t = K^{(s)}_{m}$, as well as
  $\overline{v}_t = K^{(s)}_{m}  \rightarrow \overline{v}_{t-1} \rightarrow \overline{v}_{t-2}  \rightarrow \ldots \rightarrow \overline{v}_1 \rightarrow \overline{v}$,
  is  of the same length $t$.
When $m$ is odd, the king $K^{(s)}_{m} $  is colored  red, i.e. $K^{(s)}_{m} \in R^{(s)}_{m}$. Every directed walk between him and the queen $Q^{(s)}_{m} $ is of even length, so the same is valid for vertices $v$ and $ \overline{v}$. When $m$ is even, the king $K^{(s)}_{m}$ and the queen $Q^{(s)}_{m}$ are of different colors (i.e. in different classes). Therefore, the same holds for $v$ and $\overline{v}$.

For  $v \in  {\cal S}^{(0)}_{m}$,  $\overline{v}$  must belong to the same set.  Namely,
if we assume the opposite, i.e.  $\overline{v} \in  {\cal S}^{(s)}_{m}$, where $s \neq 0$, then
 $\overline{\overline{v}} =v \in  {\cal S}^{(s)}_{m}$,     which is impossible.
$\Box$ \end{proof}

\begin{lem}   \label{thm:3} If $1 \leq s \leq k $ ($k \in N$), then
\\  a)
$$\ds \mid R^{(s)}_{2k} \mid  = \mid G^{(s)}_{2k} \mid = \ds    {2k \choose  k-s}, \mbox{ \ }
\ds \mid S^{(s)}_{2k} \mid  = 2 \cdot {2k \choose  k-s}  \mbox{ \  and  \ }
\ds \mid S^{(0)}_{2k} \mid = \ds  {2k \choose   k}. $$
\noindent b)
\bc $\ds \mid R^{(s)}_{2k+1} \mid  =\ds  {2k +1 \choose   k-s+1}, \mbox{ \ }  \ds \mid G^{(s)}_{2k+1} \mid = \ds  {2k +1 \choose   k-s}, \mbox{ \ }
\ds \mid S^{(s)}_{2k+1} \mid = \ds  {2k +2 \choose   k-s+1}  \mbox{ \  and  } $ \\
$\ds \mid S^{(0)}_{2k+1} \mid  = \ds  {2k+1 \choose   k}.$ \ec
\end{lem}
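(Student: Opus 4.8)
The plan is to reduce all seven cardinalities to a single counting function and evaluate it once. By the definitions of $R_m^{(s)}$ and $G_m^{(s)}$ we have $R_m^{(s)}=\{x\in\{0,1\}^m:Z(x)=s\}$ and $G_m^{(s)}=\{x\in\{0,1\}^m:Z(x)=-s\}$, while $S_m^{(s)}=R_m^{(s)}\cup G_m^{(s)}$ is a disjoint union for $s\ge 1$ and $S_m^{(0)}=\{x:Z(x)=0\}$. Hence it suffices to compute $\ds N_m(s):=\mid\{x\in\{0,1\}^m:Z(x)=s\}\mid$ for every integer $s$, and then simply read off the required numbers.

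To evaluate $N_m(s)$, I would split the $m$ positions into the $\lceil m/2\rceil$ odd ones and the $\lfloor m/2\rfloor$ even ones. A word with exactly $i$ zeros among the odd positions and $j$ zeros among the even positions can be formed in $\ds {\lceil m/2\rceil \choose i}{\lfloor m/2\rfloor \choose j}$ ways and satisfies $Z(x)=i-j$, so $\ds N_m(s)=\sum_{i-j=s}{\lceil m/2\rceil \choose i}{\lfloor m/2\rfloor \choose j}$. This sum I would evaluate either by Vandermonde's convolution or, equivalently, by extracting the coefficient of $t^s$ in $\ds (1+t)^{\lceil m/2\rceil}(1+t^{-1})^{\lfloor m/2\rfloor}=t^{-\lfloor m/2\rfloor}(1+t)^m$, which yields $\ds N_m(s)={m \choose \lfloor m/2\rfloor+s}$.

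The rest is bookkeeping. For $m=2k$ this gives $\ds N_{2k}(s)={2k \choose k+s}={2k \choose k-s}$, hence $\ds \mid R_{2k}^{(s)}\mid=\mid G_{2k}^{(s)}\mid={2k \choose k-s}$, $\ds \mid S_{2k}^{(s)}\mid=2{2k \choose k-s}$ for $s\ge 1$, and $\ds \mid S_{2k}^{(0)}\mid={2k \choose k}$. For $m=2k+1$ we get $\ds N_{2k+1}(s)={2k+1 \choose k+s}$, so $\ds \mid R_{2k+1}^{(s)}\mid={2k+1 \choose k+s}={2k+1 \choose k-s+1}$, $\ds \mid G_{2k+1}^{(s)}\mid={2k+1 \choose k-s}$, $\ds \mid S_{2k+1}^{(0)}\mid={2k+1 \choose k}$, and finally $\ds \mid S_{2k+1}^{(s)}\mid={2k+1 \choose k-s+1}+{2k+1 \choose k-s}={2k+2 \choose k-s+1}$ by Pascal's rule.

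I do not expect a genuine obstacle; the proof is a short generating-function (equivalently, Vandermonde) computation. The only points needing a little care are: keeping track of the unequal numbers of odd and even positions when $m$ is odd; checking that the implicitly truncated summation ranges do no harm, which they do not since $\ds {n \choose r}=0$ for $r<0$ or $r>n$; and observing that for $1\le s\le k$ every word counted is indeed a vertex of ${\cal D}^*_m$, since the sole binary word absent from $V({\cal D}^*_m)$, namely $(01)^k0$ for $m=2k+1$, has $Z=k+1$ and so lies outside all ranges considered. If one prefers to avoid Vandermonde, the equality $\mid G_{2k}^{(s)}\mid=\mid R_{2k}^{(s)}\mid$ also follows at once from the reversal bijection $x\mapsto\overline x$, which for even $m$ interchanges odd and even positions and therefore negates $Z$; but the generating-function computation already delivers all the identities uniformly, so I would present that.
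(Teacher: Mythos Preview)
Your proof is correct and follows essentially the same approach as the paper: split the positions into the $\lceil m/2\rceil$ odd and $\lfloor m/2\rfloor$ even ones, count words with a prescribed value of $Z$ as a convolution of two binomial coefficients, and evaluate by Vandermonde. The only difference is packaging: the paper treats the four cases $(m,s)\in\{2k,2k+1\}\times\{0,\ge 1\}$ separately, while you compute the single quantity $N_m(s)={m\choose \lfloor m/2\rfloor+s}$ once (also offering the equivalent generating-function derivation) and then specialise; this is a mild streamlining, not a different idea.
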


\begin{proof}
Let $i$   be the number of 0's at odd position in a word $v \in V({\cal D}_m^*)$, i.e. $i=odd(v)$.
Using  the definition of the sets $S_m^{(0)}$, $R_m^{(s)}$ and $G_m^{(s)}$ ($1\leq s \leq k$)  and Vandermonde's convolution we analyze all the cases.
\begin{itemize}
\item \underline{$m=2k, v \in S_m^{(0)}$} \\
$\ds \mid S^{(0)}_{2k} \mid  = \sum_{i=0}^{k}  {k \choose  i} {k \choose  i}  = \sum_{i=0}^{k}  {k \choose  i} {k \choose  k-i} =   {2k \choose  k} $

\item \underline{$m=2k+1, v \in S_m^{(0)}$} \\
$\ds \mid S^{(0)}_{2k+1} \mid  = \sum_{i=0}^{k}  {k+1 \choose  i} {k \choose  i}  = \sum_{i=0}^{k}  {k+1 \choose  i} {k \choose  k-i} =   {2k+1 \choose  k} $

\item \underline{$m=2k, v \in S_m^{(s)}$} \\
$\ds \mid R^{(s)}_{2k} \mid  = \sum_{i=0}^{k}  {k \choose  i} {k \choose  i-s}  = \sum_{i=0}^{k}  {k \choose  i} {k \choose  k-i+s} =   {2k \choose  k+s} =   {2k \choose  k-s} $ \\
$\ds \mid G^{(s)}_{2k} \mid  = \sum_{i=0}^{k}  {k \choose  i} {k \choose  i+s}  = \sum_{i=0}^{k}  {k \choose  i} {k \choose  k-i-s} =   {2k \choose  k-s} $

\item \underline{$m=2k+1, v \in S_m^{(s)}$} \\
$\ds \mid R^{(s)}_{2k+1} \mid  = \sum_{i=0}^{k}  {k+1 \choose  i} {k \choose  i-s}  = \sum_{i=0}^{k}  {k+1 \choose  i} {k \choose  k-i+s} =   {2k+1 \choose  k+s} =   {2k+1 \choose  k-s+1} $ \\
$\ds \mid G^{(s)}_{2k+1} \mid  = \sum_{i=0}^{k}  {k+1 \choose  i} {k \choose  i+s}  = \sum_{i=0}^{k}  {k+1 \choose  i} {k \choose  k-i-s} =   {2k+1 \choose  k-s}. $
\end{itemize}
$\Box$ \end{proof}

Now, from Lemma~\ref{thm:1} we conclude that the components of the transfer digraph ${\cal D}^{*}_{m}$ are $ \langle S_{m}^{(s)} \rangle_{{\cal D}^{*}_{m}} $ where $0 \leq s \leq \lfloor m/2 \rfloor$.
Note that the digraph  $ \langle S_{m}^{(0)} \rangle_{{\cal D}^{*}_{m}} $ is the component which  contains the vertex $1^m $. It implies
that $ \langle S_{m}^{(0)} \rangle_{{\cal D}^{*}_{m}} = {\cal A}_m^* $.
Having  in mind that
 $\ds {\cal D}^{*}_m = {\cal A}^*_m    \cup $ $\ds  (\bigcup_{s=1}^{\left\lfloor    \frac{m}{2} \right\rfloor }{\cal B}^{*(s)}_m) = $
 $\ds \bigcup_{s=0}^{\lfloor m/2 \rfloor} \langle  S_{m}^{(s)} \rangle_{{\cal D}^{*}_{m}} $ and
 $\mid S_m^{(s_1)}\mid > \mid S_m^{(s_2)}\mid $ for $1 \leq s_1 < s_2 \leq \lfloor m/2 \rfloor$, we conclude that
 $ {\cal B}^{*(s)}_{m}  = \langle S_{m}^{(s)} \rangle_{{\cal D}^{*}_{m}} $  for all $s=1,2, \ldots ,  \lfloor m/2 \rfloor$.
Consequently, all the components  ${\cal B}^{*(s)}_{m}$ ($ 1 \leq s \leq \ds \left\lfloor    \ds  m/2  \right\rfloor $) are bipartite digraphs.
Lemma~\ref{thm:2} and Lemma~\ref{thm:3} further  complete the proof of   the Theorem~\ref{conj:1}.

\hspace*{5mm}



\vspace{1.5cc}
\begin{center}
{\bf ACKNOWLEDGEMENTS}
\end{center}

The authors are indebted to the anonymous referees  for their valuable
suggestions and helpful comments which  improved the clarity of the presentation.
The authors would like to express their  gratitude to Roddy Bogawa
 for his meticulous reading of the first draft of the manuscript and on many useful suggestions.

This work  was  supported by  the Ministry of Education, Science and Technological Development of the   Republic of Serbia (Grants No. 451-03-9/2022-14/200125, 451-03-68/2022-14/200156)
and the Project of the Department for fundamental disciplines in technology, Faculty of Technical Sciences, University of Novi Sad "Application of general disciplines in technical and IT sciences".


\vspace*{0.5cm}

\noindent Faculty of Technical Sciences,
  University of Novi Sad,
  Novi Sad, Serbia\\
     E-mail: jelenadjokic@uns.ac.rs  \\
E-mail: ksenija@uns.ac.rs (corresponding author)

\vspace*{0.5cm}

 \noindent
  Dept.\ of Math.\ \&\ Info.,
  Faculty of Science,
  University of Novi Sad,
  Novi Sad, Serbia \\
   E-mail: olga.bodroza-pantic@dmi.uns.ac.rs

\end{document}